\newtheorem{theorem}{Theorem}
\newtheorem{corollary}{Corollary}
\newtheorem{lemma}{Lemma}
\newtheorem{proposition}{Proposition}
\def\lessim{\ \lower4pt\hbox{$\buildrel{\displaystyle <}\over\sim$}\ }
\newcommand{\1}{\mathbbm{1}}
\begin{document}

\title{Uniform central limit theorems for the\\Grenander 
estimator}

\author{Jakob S\"ohl \footnote{The author thanks Evarist Gin\'e and Richard 
Nickl for
providing him with a preliminary version of their forthcoming book. This paper
is based on a chapter from their book and extends the results therein. The 
author is grateful to
Richard Nickl for helpful discussions on the topic.}\\
\\
\textit{University of Cambridge \footnote{Statistical Laboratory, Department of
Pure Mathematics and Mathematical Statistics, University of Cambridge, CB3 0WB,
Cambridge, UK. Email: j.soehl@statslab.cam.ac.uk}}}

\date{26 June 2015}

\maketitle

\begin{abstract}

We consider the Grenander estimator that is the maximum likelihood estimator 
for non-increasing densities. We prove uniform central limit theorems for
certain subclasses of bounded variation functions and for H\"older balls of
smoothness $s>1/2$. We do not assume that the density is differentiable or 
continuous.
The proof can be seen as an adaptation of the method for the 
parametric maximum likelihood estimator to the nonparametric setting.
Since nonparametric maximum likelihood estimators lie on the boundary, the
derivative of the likelihood cannot be expected to equal zero as in the
parametric case. 
Nevertheless,
our proofs rely on the fact that the derivative of the likelihood can be
shown to be small at the maximum likelihood estimator. 

\medskip

\noindent\textit{MSC 2010 subject classifications}: Primary 60F05; secondary 
62G07, 62E20.

\noindent\textit{Key words and phrases}: UCLT, Grenander estimator, NPMLE, 
H\"older
class.
\end{abstract}

\section{Introduction}

A fundamental approach to statistical estimation is finding the probability
measure which renders the observation most likely. This principle of maximum
likelihood estimation has proved very successful in parametric estimation but
leads to difficulties in nonparametric problems since the likelihood is
typically unbounded so that no maximum is attained.
However, in nonparametric problems with shape constraints the maximum
likelihood
estimator is often well-defined and thus the maximum likelihood approach can be
extended to these situations. Examples include non-increasing, $k$-monotone,
convex, concave and log-concave functions.

The classical
parametric maximum likelihood theory is based on the estimator $\hat\theta_n$
being in the interior of the parameter space and on the resulting fact that the
derivative of the likelihood vanishes at $\hat\theta_n$. The theory of
nonparametric maximum likelihood estimation is quite different from the
parametric theory since the estimator lies on the boundary of the parameter
space and thus in general the derivative of the likelihood will not be
zero. 
But in some nonparametric situations the derivative of the likelihood can be 
shown to be sufficiently small, thus enabling a proof strategy paralleling 
the one in the classical parametric theory.
\cite{Nickl2007} considered the nonparametric maximum likelihood estimator for 
estimating a 
density in a Sobolev ball and proved uniform central limit theorems using this 
approach.

We pursue this method of proof in the problem of estimating a non-increasing 
density $p_0$. 
The maximum likelihood estimator $\hat p_n$ is called the
Grenander estimator in this situation since it was first derived by 
\cite{Grenander1956}.
It is well known to be the left-derivative of the least concave
majorant of the empirical distribution function. 
The main results will be
uniform central limit theorems for the Grenander estimator that in particular 
imply for functions~$f$
\begin{align*}
 \sqrt n \int_0^\infty (\hat p_n(x)-p_0(x))f(x)dx \to^d N(0,
\|f-P_0f\|_{L^2(P_0)}^2),
\end{align*}
where $P_0$ is the probability measure of the non-increasing density $p_0$ and 
$P_0 f\equiv \int_0^\infty f(x)dP_0(x)$. 
We do not assume the density $p_0$ to be differentiable or continuous.
Our results are uniform for $f$ varying in a 
class of functions
and we cover two different types of classes. 
The first type is a subclass of the bounded variation functions and for 
each point of 
discontinuity $t$ of $p_0$ the indicator function $\1_{[0,t]}$ is contained in 
such 
a class.
The 
second type of classes is given by balls in H\"older spaces $C^s$ of order 
$s>1/2$. 

Under a strict curvature condition and for continuously differentiable $p_0$,
\cite{KieferWolfowitz1976} proved that the difference between the 
distribution function of the Grenander
estimator and the empirical distribution
function in supremum norm is with probability one of order
$n^{-2/3}\log(n)$. On the one hand this means that the two distribution 
functions are close and the distribution function of the Grenander estimator 
does essentially not improve on the empirical distribution function. On the 
other hand it shows that the distribution function of the 
Grenander estimator enjoys many optimality properties of the empirical 
distribution function. The Kiefer--Wolfowitz theorem can be used to prove 
that the distribution function of the Grenander 
estimator is an asymptotically minimax estimator for concave 
distribution functions. 
It further implies a uniform central limit theorem for the Grenander estimator 
over the class of all indicator functions $\1_{[0,t]}$, $t\ge0$. 
The Kiefer--Wolfowitz theorem was used by
\cite{SenBanerjeeWoodroofe2010} to study 
consistency and inconsistency of bootstrap methods when estimating a 
non-increasing density.

Results similar to the Kiefer--Wolfowitz theorem hold under other shape
constraints as well. In addition to giving an updated proof of the 
Kiefer--Wolfowitz theorem,
\cite{BalabdaouiWellner2007} showed such a theorem in the case 
where the density
is assumed
to be convex decreasing and where the maximum likelihood
estimator is replaced by the least squares estimator.
\cite{DuembgenRufibach2009} derived the rate of estimation for log-concave
densities in supremum norm and showed that the difference between empirical and
estimated distribution function is $o(n^{-1/2})$.
\cite{DurotLopuhaae2014} showed a general Kiefer--Wolfowitz type theorem which 
covers the estimation of monotone regression curves, monotone densities and 
monotone failure rates.

\cite{Jankowski2014} studied the local convergence rates of the Grenander
estimator in situations where it is misspecified and derived the asymptotic
distribution of linear functionals under possible misspecification. She 
distinguishes between curved and flat parts regarding the density~$p_0$. On the 
curved parts our results have the advantage that we do not need to assume 
that~$p_0$ and~$f$ are differentiable, whereas on the flat parts 
\cite{Jankowski2014} is more general by treating $L^p$ functions $f$.
We will discuss this in detail in Section~\ref{sec:discussion}.
Beyond asymptotic normality for
single functionals our work includes a uniformity in the underlying
functional, which is important to apply the results by \cite{Nickl2009} 
concerning convolutions of density estimators and to show the ``plug-in 
property'' introduced by
\cite{BickelRitov2003}.
Estimators with this property are 
rate optimal density estimators that simultaneously lead to the efficient 
estimation of functionals with uniform convergence over a class of functionals.
We will elaborate on these applications in Section~\ref{sec:discussion}.
\cite{Nickl2007} discusses applications of uniform 
central limit theorems in the context of the maximum likelihood estimator over
a Sobolev ball.
Uniform central limit theorems were also shown for kernel density estimators and
for wavelet density estimators by \citet{GineNickl2008,GineNickl2009}.

The method of proof presented in this paper is of general nature and its 
relevance extends to maximum likelihood estimators in other problems with shape 
constraints. For example one could consider the estimation of classes of convex 
decreasing densities or, more general, of $k$-monotone densities or of 
other shape constrained densities as long as the classes are convex and closed 
with respect to the supremum norm.

This paper is organised as follows. In Section~\ref{sec:results} we state the 
uniform
central limit theorems for a subclass of the bounded variation functions and
for H\"older balls. 
Section~\ref{sec:discussion} provides a discussion and applications of our 
results. In Section~\ref{sec:deriv} we explain the
general approach. In Section~\ref{sec:bounds} we derive upper and lower bounds
in probability for the Grenander estimator and recall the $L^2$-convergence
rate. Section~\ref{sec:proofs} develops the approach further
and contains the proofs of the main results.

\section{Main results}\label{sec:results}

Let $X_1, \dots, X_n$ be i.i.d.~on $[0,\infty)$ with law $P_0$ and distribution
function $F_0(x)=\int_0^x dP_0, x \in [0,\infty)$. In order to state the main 
results we 
introduce some notation.
We define the empirical measure $P_n
= n^{-1} \sum_{i=1}^n \delta_{X_i}$, the empirical cumulative distribution
function $F_n(x) = \int_0^x dP_n, x \in [0,\infty)$ and 
the log-likelihood function
\begin{align} \label{LIKlog}
\ell_n(p)=\frac{1}{n}\sum_{i=1}^n \log p(X_i).
\end{align}
Under the assumption $E_{P_0}|\log p(X)|<\infty$ for all $p \in \mathcal P$, we
can define the
limiting log-likelihood function
\begin{equation}  \label{LIKllog}
\ell(p) = \int_0^\infty \log p(x) dP_0(x).
\end{equation}
If $P$ is known to have a
monotone decreasing density $p$ then the associated maximum likelihood estimator
$\hat p_n $ maximises the log-likelihood function
$\ell_n(p)$ over $$\mathcal P
\equiv \mathcal P^{mon} = \left \{p:[0,\infty) \to [0, \infty), \int_0^\infty 
p(x)dx 
=1,
\text{ $p$ is non-increasing} \right\},$$ that is,
\begin{equation} \label{LIK0}
\max_{p \in \mathcal P^{mon}} \ell_n(p) = \ell_n(\hat p_n).
\end{equation}
The maximum likelihood estimator $\hat p_n$ is known to be the left-derivative
of the least concave majorant $\hat F_n$ of the empirical distribution function
$F_n$.
Let $\hat P_n$ be the probability measure corresponding to the 
density $\hat p_n$.
For a set $T$ let $\ell^\infty(T)$ denote the space of bounded
real-valued functions on $T$ with the usual supremum norm $\|\cdot\|_\infty$.
Throughout we will denote by $\to^d$ the convergence in distribution as in
Chapter~1 in \cite{vanderVaartWellner1996}.
The $P_0$-Brownian bridge $G_{P_0}$ is defined as tight Gaussian random
variable arising from the centred Gaussian process with covariance
\begin{align*}
 \mathbb{E}[G_{P_0}(f)G_{P_0}(g)]=P_0(fg)-P_0fP_0g,
\end{align*}
where $P_0f\equiv\int_0^\infty f(x) d P_0(x)$.

The first main result is a uniform central limit theorem for a subclass of the
bounded variation functions. We start with the general result in 
Theorem~\ref{thm:bv} and consider its consequences in Corollary~\ref{cor:jump} 
and Theorem~\ref{LIKkw2}. 
Let $f,p_0\in L^1([0,\infty))$ and assume that the weak derivatives of 
$f|_{(0,\infty)}$ and $p_0|_{(0,\infty)}$ in the sense of regular Borel signed 
measures exist and denote them by $Df$ and $Dp_0$, respectively; cf., e.g., 
p.~42 in \cite{Ziemer1989}.
We define $BV[0,\infty)\equiv\{f\in 
L^1([0,\infty)):\|f\|_1+|Df|(0,\infty)<\infty\}$, where 
$|Df|$ is total variation of the signed measure $Df$.
In the following theorem it will be important that $Df$ is absolutely 
continuous with respect to $Dp_0$ since we want to ensure that the 
perturbations $p_0\pm \eta f$ with $|\eta|$ small (or slightly modified 
perturbations) are decreasing functions. To this end we denote 
the Radon--Nikodym derivative of $Df$ with respect to $Dp_0$ by $Df/Dp_0$ and 
assume that its essential 
supremum with respect to $Dp_0$, denoted by
$\|Df/Dp_0\|_{\infty,Dp_0}$, is bounded. We will consider decreasing 
densities~$p_0$ with bounded support~$S_0$. Then we can write 
$S_0=[0,\alpha_1]$ for some 
$\alpha_1>0$. For the 
statement of the theorem the values of $f$ are only important on the open 
interval $(0,\alpha_1)$
so that we can restrict $f$ to this interval for the assumptions.
\begin{theorem}\label{thm:bv}
Suppose $p_0 \in \mathcal P$ is bounded, has bounded support 
$[0,\alpha_1]$ and that $p_0(x)\ge\zeta>0$ for all $x\in[0,\alpha_1]$. Then for 
\[\mathcal B = \{f\in 
BV[0,\infty):f|_{(0,\alpha_1)}=g,\|g\|_\infty+\|Dg/Dp_0\|_{\infty,Dp_0} 
\le B\}\]
we have
\begin{align*}
 \sup_{f \in \mathcal B}|(\hat P_n -P_n)(f)| = O_{P_0^\mathbb N} (B
n^{-2/3})
\end{align*}
and consequently
\begin{align*}
 \sqrt n (\hat P_n-P_0) \to^d G_{P_0} \text{ in } \ell^\infty(\mathcal B).
\end{align*}
\end{theorem}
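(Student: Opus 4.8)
The plan is to follow the parametric-MLE strategy outlined in the introduction: since the Grenander estimator lies on the boundary of $\mathcal P^{mon}$, we cannot use that the derivative of $\ell_n$ vanishes at $\hat p_n$, but we can show it is small. Concretely, for $f\in\mathcal B$ the perturbations $p_0 \pm \eta f$ are (after restricting attention to $(0,\alpha_1)$ and using the curvature/lower bound $p_0\ge\zeta$ together with $\|Dg/Dp_0\|_{\infty,Dp_0}\le B$) non-increasing for $|\eta|$ sufficiently small, since $D(p_0\pm\eta f) = (1\pm\eta\,Df/Dp_0)\,Dp_0$ is a negative measure once $|\eta| B \le 1$. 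The support restriction and boundedness of $p_0$ ensure $p_0\pm\eta f$ stays a bona fide density after renormalising, and the lower bound $\zeta$ keeps it nonnegative. One then differentiates $\eta\mapsto\ell_n((p_0+\eta f)/c_\eta)$ and $\eta\mapsto\ell_n(\hat p_n + \eta(p_0-\hat p_n)\cdot(\text{something}))$ type expressions; the key algebraic identity, already the backbone of Section~\ref{sec:deriv}, should express $(\hat P_n - P_n)(f)$ — really $\int f\,d(\hat P_n - P_n)$ — in terms of $\int (f/\hat p_n)\,d(P_n - \hat P_n)$ or similar, exploiting that $\hat p_n$ is the NPMLE and that $\hat F_n$ majorises $F_n$ with equality on the support of $d\hat F_n$ (the complementary-slackness / characterisation of the least concave majorant).

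The second step is to turn ``derivative of the likelihood is small'' into the rate $O_{P_0^{\mathbb N}}(Bn^{-2/3})$. Here I would write $(\hat P_n - P_n)(f) = \int_0^{\alpha_1}(\hat F_n - F_n)(x)\,(-Df)(dx)$ after integration by parts (the boundary terms vanish because $\hat F_n = F_n$ at $0$ and at $\alpha_1$, as the least concave majorant agrees with $F_n$ at the endpoints of the support), so that
\begin{align*}
 |(\hat P_n - P_n)(f)| \le \|\hat F_n - F_n\|_\infty \, |Df|(0,\alpha_1) \le \|\hat F_n - F_n\|_\infty \cdot C B,
\end{align*}
using that $|Df|(0,\alpha_1) = \int |Dg/Dp_0|\,d|Dp_0| \le B\,|Dp_0|(0,\alpha_1) \le B\cdot p_0(0)$ is controlled by $B$ times a constant depending only on $p_0$. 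Then I would invoke the bounds from Section~\ref{sec:bounds} — specifically the upper and lower bounds in probability for $\hat p_n$, which via the standard switching/inverse relationship give $\|\hat F_n - F_n\|_\infty = O_{P_0^{\mathbb N}}(n^{-2/3})$ on the relevant event (this is the Kiefer--Wolfowitz-type estimate, but here I only need it in probability, not a.s., and the lower bound $p_0\ge\zeta$ plus boundedness substitutes for the $C^1$ curvature hypothesis). That yields the first display uniformly over $\mathcal B$.

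The third step deduces the functional CLT. Since $\sqrt n(P_n - P_0) \to^d G_{P_0}$ in $\ell^\infty(\mathcal B)$ holds by classical empirical process theory once $\mathcal B$ is shown to be $P_0$-Donsker — and $\mathcal B$, being a bounded subset of $BV[0,\alpha_1]$ (uniformly bounded functions of uniformly bounded variation on a compact interval), is universally Donsker by the bracketing/VC-subgraph bound for monotone-type classes — we combine this with $\sqrt n\sup_{f\in\mathcal B}|(\hat P_n - P_n)(f)| = O_{P_0^{\mathbb N}}(Bn^{-1/6}) = o_{P_0^{\mathbb N}}(1)$ and apply Slutsky in $\ell^\infty(\mathcal B)$ to conclude $\sqrt n(\hat P_n - P_0) \to^d G_{P_0}$.

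**The main obstacle** I expect is the first step: rigorously justifying that the perturbed densities can be taken non-increasing and that the NPMLE optimality yields the right algebraic identity bounding $(\hat P_n - P_n)(f)$ — in particular handling the renormalisation constant $c_\eta = 1 + \eta\int f$ carefully so that no spurious first-order term survives, and checking that $1/\hat p_n$ (which can be large where $\hat p_n$ is small near $\alpha_1$) does not spoil the bound. The renormalisation is handled because $\int_0^{\alpha_1} f\,d(\hat P_n - P_n)$ is exactly the quantity of interest and the $c_\eta$ factor contributes a term proportional to $\ell_n'$ evaluated at the unnormalised direction, which is again controlled by the same $\|\hat F_n - F_n\|_\infty$ estimate; the integration-by-parts route above in fact sidesteps the $1/\hat p_n$ issue entirely, which is why I would organise the proof around it rather than around differentiating the log-likelihood directly.
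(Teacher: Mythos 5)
There is a genuine gap in your second step, and it is fatal to the claimed rate. You reduce the problem, via integration by parts, to the bound
\begin{equation*}
\sup_{f\in\mathcal B}|(\hat P_n-P_n)(f)|\le \|\hat F_n-F_n\|_\infty\cdot CB ,
\end{equation*}
and then assert $\|\hat F_n-F_n\|_\infty=O_{P_0^{\mathbb N}}(n^{-2/3})$, claiming that ``the lower bound $p_0\ge\zeta$ plus boundedness substitutes for the $C^1$ curvature hypothesis.'' It does not. The sup-norm statement is the Kiefer--Wolfowitz theorem, whose proof uses a strict curvature condition essentially; Theorem~\ref{thm:bv} assumes no curvature, and for a density with flat pieces (e.g.\ piecewise constant, which satisfies all the hypotheses) the least concave majorant of $F_n$ differs from $F_n$ in sup norm by order $n^{-1/2}$ on the flat parts, not $n^{-2/3}$. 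Moreover, even under curvature the correct rate is $(n/\log n)^{-2/3}$, and the logarithmic factor is known to be unavoidable for the sup norm (cf.\ \cite{DurotTocquet2003}, discussed in Section~\ref{sec:discussion}); so any route that passes through $\|\hat F_n-F_n\|_\infty$ cannot deliver the clean $Bn^{-2/3}$ bound of the theorem. Note also that Lemma~\ref{LIKlow}, which you cite for this step, only gives upper and lower bounds on $\hat p_n$ itself; it does not come close to a Kiefer--Wolfowitz estimate, so you are invoking a result that is neither available in the paper nor true under the theorem's hypotheses.

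The paper's proof succeeds precisely by \emph{not} decoupling $\hat F_n-F_n$ from the weight $Df$: because $Df\ll Dp_0$ with bounded density, the functional is controlled through the likelihood derivative rather than through a uniform bound on $\hat F_n-F_n$. Your first step correctly identifies this mechanism (the perturbations $p_0\pm\eta\pi_0(f)$, Lemma~\ref{LIKtrick}), but you then abandon it. The missing work is: (i) the bound $|D\ell_n(\hat p_n)[\hat p_n-p_0]|=O_{P_0^{\mathbb N}}(n^{-2/3})$ of Lemma~\ref{LIKgsc}, obtained from an empirical-process bound (Gin\'e--Koltchinskii) over a $BV$-ball intersected with an $L^2(P_0)$-ball of radius $n^{-1/3}$, together with the $L^2$ rate of Proposition~\ref{propRate} and the order-statistic bound $\alpha_1-X_{(n)}=O_{P_0^{\mathbb N}}(\log n/n)$; and (ii) the second-order Taylor expansion in the proof of Theorem~\ref{thm:bv}, whose remainder terms are again controlled by such localized empirical-process suprema and by $\|\hat p_n-p_0\|_2^2=O_{P_0^{\mathbb N}}(n^{-2/3})$. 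Your Donsker/Slutsky step for the CLT is fine, but it rests on the first display, which your argument does not establish.
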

The proof of Theorem~\ref{thm:bv} is deferred to the end of 
the paper. 
Let us consider one particular example as a corollary.
We can take for points $t>0$ where $p_0$ is discontinuous 
the indicator function $f=\1_{[0,t]}$ in Theorem~\ref{thm:bv}. We have 
$Df=-\delta_t$.
Say $p_0$ has a discontinuity of size 
$\Delta\equiv
\lim_{s\nearrow t}p_0(s)-\lim_{s \searrow t} p_0(s)>0$ at $t$. Then we have
$Dp_0=-\Delta\delta_t-\mu$ for a positive measure~$\mu$. In this case we obtain
$\|Df/Dp_0\|_{\infty,Dp_0}=1/\Delta$ leading to 
the following corollary.
\begin{corollary}\label{cor:jump}
Suppose $p_0 \in \mathcal P$ is bounded, has bounded support 
$S_0$ and that $p_0(x) \ge\zeta>0$ for all $x\in S_0$. 
Then for each $t>0$ where $p_0$ is discontinuous we have
\begin{align*}
 |\hat F_n(t) -F_n(t)| 
= O_{P_0^\mathbb N} (n^{-2/3})
\end{align*}
and consequently
\begin{align*}
 \sqrt{n}(\hat F_n(t)-F_0(t))\to^d N(0,F_0(t)-F_0(t)^2).
\end{align*}
\end{corollary}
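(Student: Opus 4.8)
The plan is to obtain Corollary~\ref{cor:jump} by specialising Theorem~\ref{thm:bv} to the single function $f=\1_{[0,t]}$. First I would check that this $f$ belongs to the class $\mathcal B$ for a suitable constant $B$. Since $p_0$ has support $[0,\alpha_1]$ and is non-increasing, a genuine jump of $p_0$ at some $t>0$ can only occur for $t\in(0,\alpha_1]$; assume first $0<t<\alpha_1$, so that the restriction $g=f|_{(0,\alpha_1)}$ satisfies $\|g\|_\infty=1$ and $Dg=-\delta_t$. As computed in the paragraph preceding the corollary, a jump of size $\Delta>0$ at $t$ gives $Dp_0=-\Delta\delta_t-\mu$ with $\mu\ge0$, whence $Dg$ is absolutely continuous with respect to $Dp_0$ and $\|Dg/Dp_0\|_{\infty,Dp_0}=1/\Delta$. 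Hence $f\in\mathcal B$ as soon as $B\ge 1+1/\Delta$, and the remaining hypotheses of Theorem~\ref{thm:bv} ($p_0$ bounded, with bounded support, and $p_0\ge\zeta>0$ on its support) are precisely those assumed in the corollary. The boundary case $t=\alpha_1$ is degenerate: there $Dg=0$ on $(0,\alpha_1)$, so $f\in\mathcal B$ trivially with $B=1$, while $F_0(\alpha_1)=1$ and $\hat F_n(\alpha_1)=F_n(\alpha_1)=1$ almost surely, so both assertions hold with variance zero.

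With $f\in\mathcal B$ fixed, the first conclusion of Theorem~\ref{thm:bv} yields
\[
 |(\hat P_n-P_n)(f)|\le \sup_{h\in\mathcal B}|(\hat P_n-P_n)(h)|=O_{P_0^{\mathbb N}}(Bn^{-2/3})=O_{P_0^{\mathbb N}}(n^{-2/3}).
\]
Since $\hat P_n(\1_{[0,t]})=\int_0^t d\hat P_n=\hat F_n(t)$ (using $\hat F_n(0)=0$) and likewise $P_n(\1_{[0,t]})=F_n(t)$, this is exactly the first displayed statement of the corollary.

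For the limit law I would invoke the second conclusion of Theorem~\ref{thm:bv}, namely $\sqrt n(\hat P_n-P_0)\to^d G_{P_0}$ in $\ell^\infty(\mathcal B)$. As the coordinate evaluation $h\mapsto h(f)$ is continuous on $\ell^\infty(\mathcal B)$ for the fixed element $f=\1_{[0,t]}\in\mathcal B$, the continuous mapping theorem gives $\sqrt n(\hat P_n-P_0)(f)\to^d G_{P_0}(f)$. Here $(\hat P_n-P_0)(f)=\hat F_n(t)-F_0(t)$, and $G_{P_0}(f)$ is centred Gaussian with variance $P_0(f^2)-(P_0f)^2=F_0(t)-F_0(t)^2$, which is the asserted distribution. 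Equivalently, one may split $\sqrt n(\hat F_n(t)-F_0(t))=\sqrt n(F_n(t)-F_0(t))+\sqrt n(\hat F_n(t)-F_n(t))$, apply the classical central limit theorem to the first summand and note that the second is $\sqrt n\,O_{P_0^{\mathbb N}}(n^{-2/3})=o_{P_0^{\mathbb N}}(1)$, then conclude by Slutsky's lemma.

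There is no serious obstacle, since the corollary is a direct specialisation of Theorem~\ref{thm:bv}; the only point demanding care is the verification $\1_{[0,t]}\in\mathcal B$, in particular the absolute continuity $D\1_{[0,t]}\ll Dp_0$. This is exactly where the hypothesis that $p_0$ is \emph{discontinuous} at $t$ enters: the atom $-\Delta\delta_t$ of $Dp_0$ is what keeps the perturbations $p_0\pm\eta\1_{[0,t]}$ non-increasing for small $|\eta|$, and were $p_0$ continuous at $t$ the Radon--Nikodym derivative $D\1_{[0,t]}/Dp_0$ would fail to be essentially bounded.
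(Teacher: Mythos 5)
Your proposal is correct and follows exactly the route the paper intends: the corollary is the specialisation of Theorem~\ref{thm:bv} to $f=\1_{[0,t]}$, with the verification $\|Dg/Dp_0\|_{\infty,Dp_0}=1/\Delta$ carried out in the paragraph preceding the corollary, and the limit law then follows from the uniform CLT (or, equivalently, from the first display plus the classical CLT and Slutsky). Your explicit treatment of the boundary case $t=\alpha_1$ and of why discontinuity of $p_0$ at $t$ is needed for the absolute continuity $D\1_{[0,t]}\ll Dp_0$ is a welcome, correct elaboration of what the paper leaves implicit.
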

Here for $t>0$ such that $F_0(t)=1$ we understand $N(0,0)$ to be 
$\delta_0$.
To formulate the next theorem we define for $s>0$ the H\"older spaces
\begin{align*}
 &C^s([0,\infty))\\
&\equiv\bigg\{f\in 
C([0,\infty)):\|f\|_{C^s}=\sum_{j=0}^{[s]}\|D^j 
f\|_\infty+\sup_{x\neq y}\frac{|D^{[s]}f(x)-D^{[s]}f(y)|}{|x-y|^{s-[s]}}
<\infty\bigg\},
\end{align*}
where $[s]$ denotes the integer part of $s$ and $C([0,\infty))$ are the bounded 
real-valued continuous functions on $[0,\infty)$. We will see that under a 
strict curvature condition the set $\mathcal B$ in Theorem~\ref{thm:bv} 
contains $C^1$-H\"older balls. By strict curvature condition we have in mind 
that $p_0'$ is bounded away from zero, that is 
$\inf_{x\in S_0}|p_0'(x)|\ge\xi>0$ or equivalently 
$\sup_{x\in S_0}1/|p_0'(x)|\le1/\xi$. 
We do not want to assume that $p_0'$ exists classically.
To allow for discontinuities 
of $p_0$ and to stay in the general setting  of weak derivatives we assume that 
the Lebesgue measure on 
$S_0$ denoted by $\lambda$ is absolutely continuous with respect to $Dp_0$ and 
replace the 
assumption $\sup_{x\in S_0}1/|p_0'(x)|\le1/\xi$ by the weaker assumption
$\|\lambda/Dp_0\|_\infty\le1/\xi$, where $\lambda/Dp_0$ is the 
Radon--Nikodym derivative of $\lambda$ with respect to $Dp_0$.
We remark that $\|\lambda/Dp_0\|_{\infty,Dp_0}=\|\lambda/Dp_0\|_{\infty}$.
Let $\mathcal F$ be a $C^1$-H\"older ball and 
$g=f|_{(0,\alpha_1)}$ with $f\in\mathcal F$.
Then 
$\|Dg/Dp_0\|_{\infty,Dp_0}=\|Dg/\lambda \cdot \lambda/Dp_0\|_{\infty,Dp_0}\le 
(1/\xi) 
\|g'\|_\infty$ and we see that the $C^1$-H\"older ball $\mathcal F$ is 
contained in $\mathcal 
B$ for some $B$. This special case of Theorem~\ref{thm:bv} with $\mathcal 
F$ instead of $\mathcal B$
can be generalized to balls in H\"older spaces $C^s([0,\infty))$ of order 
$s>1/2$.
\begin{theorem} \label{LIKkw2}
Suppose $p_0 \in \mathcal P$ is bounded, has bounded support 
$S_0$ and that $p_0(x)\ge\zeta>0$ for all $x\in S_0$. 
Denote by $\lambda$ the Lebesgue measure on $S_0$ and let~$\lambda$ be 
absolutely continuous
with respect to $Dp_0$ and $\|\lambda/Dp_0\|_\infty<\infty$.
Let $\mathcal F$ be a ball in the $s$-H\"older space $C^s([0,\infty))$ of order 
$s>1/2$. 
Then
$$\sup_\mathcal F|(\hat P_n
-P_n)(f)| = o_{P_0^\mathbb
N}(1/\sqrt n) $$ as $n \to \infty$ and thus
$$\sqrt n (\hat P_n-P_0) \to^d G_{P_0} \text{ in } 
\ell^\infty(\mathcal F).$$
In particular, for
any $f \in C^s([0,\infty))$ with $s>1/2$ we have $$\sqrt n \int_0^\infty (\hat 
p_n(x)-p_0(x))f(x)dx \to^d N(0,
\|f-P_0f\|_{L^2(P_0)}^2).$$
\end{theorem}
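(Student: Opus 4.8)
The plan is to deduce the theorem from Theorem~\ref{thm:bv} by interpolation, smoothing a general H\"older function down to a function of the type admitted there at a scale tuned to $n$. Fix a ball $\mathcal F\subset C^s([0,\infty))$ of radius $R$. As noted just before the statement, if $s\ge1$ then $\mathcal F\subset\mathcal B$ for a suitable $B$ and the conclusion is immediate from Theorem~\ref{thm:bv}, so I would concentrate on $1/2<s<1$. For an integer $K\ge1$ I would take $f_K$ to be a smoothed version of $f$: extend $f$ to $\mathbb R$ with comparable $C^s$-norm, convolve with $K\phi(K\cdot)$ for a fixed compactly supported mollifier $\phi$, restrict to $[0,\infty)$, and multiply by a fixed $C^\infty$ cut-off equal to $1$ on $[0,\alpha_1]$ and vanishing outside $[0,\alpha_1+1]$. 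The standard mollification estimates give, with constants depending only on $R$ and $\phi$, that $\|f-f_K\|_\infty\lesssim K^{-s}$, $\|f_K\|_\infty\lesssim1$ and $\|f_K'\|_\infty\lesssim K^{1-s}$; moreover $f_K\in BV[0,\infty)$ and the restriction of $Df_K$ to $(0,\alpha_1)$ has Lebesgue density $f_K'$, so with $1/\xi:=\|\lambda/Dp_0\|_\infty$ the computation preceding the theorem gives $\|Df_K/Dp_0\|_{\infty,Dp_0}\le(1/\xi)\|f_K'\|_\infty\lesssim K^{1-s}$. Thus $f_K$ lies in the class $\mathcal B$ of Theorem~\ref{thm:bv} with parameter $B_K\lesssim K^{1-s}$.

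Using that $\hat P_n$ and $P_n$ charge only $[0,\alpha_1]$, I would write for $f\in\mathcal F$
\begin{align*}
(\hat P_n-P_n)(f)=\int_0^{\alpha_1}(\hat p_n-p_0)(f-f_K)-(P_n-P_0)(f-f_K)+(\hat P_n-P_n)(f_K),
\end{align*}
and bound the three terms separately and uniformly in $f\in\mathcal F$. For the first, Cauchy--Schwarz together with the $L^2$-rate $\|\hat p_n-p_0\|_{L^2}=O_{P_0^{\mathbb N}}(n^{-1/3})$ recalled in Section~\ref{sec:bounds} gives a bound $O_{P_0^{\mathbb N}}(n^{-1/3}K^{-s})$. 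For the third, Theorem~\ref{thm:bv} applied with $B=B_K$ gives $O_{P_0^{\mathbb N}}(K^{1-s}n^{-2/3})$. For the middle term, since $P_n$ and $P_0$ charge only $[0,\alpha_1]$ it equals $(P_n-P_0)$ of the restriction $(f-f_K)|_{[0,\alpha_1]}$; the family $\{(f-f_K)|_{[0,\alpha_1]}:f\in\mathcal F\}$ lies in a fixed ball of $C^s([0,\alpha_1])$ (because $f\mapsto f-f_K$ is bounded into $C^s([0,\alpha_1])$ with operator norm bounded uniformly in $K$), which is $P_0$-Donsker as $s>1/2$ (its $L^2(P_0)$-bracketing entropy is $\lesssim\ve^{-1/s}$, whose square root is integrable), while its $L^2(P_0)$-envelope is $\lesssim K^{-s}\to0$; hence asymptotic equicontinuity of the empirical process over this Donsker class yields $\sqrt n\sup_{f\in\mathcal F}|(P_n-P_0)(f-f_K)|=o_{P_0^{\mathbb N}}(1)$ whenever $K=K_n\to\infty$.

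Collecting the estimates, $\sqrt n\sup_{\mathcal F}|(\hat P_n-P_n)(f)|=O_{P_0^{\mathbb N}}(n^{1/6}K_n^{-s})+O_{P_0^{\mathbb N}}(K_n^{1-s}n^{-1/6})+o_{P_0^{\mathbb N}}(1)$, and I would finish by choosing $K_n=\lceil n^\gamma\rceil$ with $\tfrac1{6s}<\gamma<\tfrac1{6(1-s)}$ --- a non-empty range precisely because $s>1/2$ --- which makes the first two terms $o_{P_0^{\mathbb N}}(1)$ as well. This gives $\sup_{\mathcal F}|(\hat P_n-P_n)(f)|=o_{P_0^{\mathbb N}}(n^{-1/2})$, hence $\sqrt n(\hat P_n-P_0)=\mathbb G_n+o_{P_0^{\mathbb N}}(1)$ in $\ell^\infty(\mathcal F)$; combined with the Donsker property of $\mathcal F$ and Slutsky's lemma this gives $\sqrt n(\hat P_n-P_0)\to^d G_{P_0}$ in $\ell^\infty(\mathcal F)$. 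The single-functional assertion is the special case $\mathcal F=\{f\}$, where $G_{P_0}(f)\sim N(0,P_0(f^2)-(P_0f)^2)=N(0,\|f-P_0f\|_{L^2(P_0)}^2)$.

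The step I expect to be the real obstacle is the exponent bookkeeping of the last paragraph, specifically making it work down to $s>1/2$. Bounding $(\hat P_n-P_n)(f-f_K)$ crudely by $\|f-f_K\|_\infty\lesssim K^{-s}$ would force $K_n\gg n^{1/(2s)}$, which is compatible with the constraint $K_n\ll n^{1/(6(1-s))}$ coming from Theorem~\ref{thm:bv} only when $s>3/4$. Reaching the full range therefore requires splitting off $\hat p_n-p_0$ and using its $n^{-1/3}$ rate in $L^2$, and separately treating $(P_n-P_0)(f-f_K)$ through the equicontinuity of the empirical process over a H\"older ball; checking that both finer ingredients are available and that their exponents dovetail with the $n^{-2/3}$ rate of Theorem~\ref{thm:bv} is the heart of the matter.
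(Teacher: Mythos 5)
Your proof is correct and follows essentially the same route as the paper: the paper uses the identical three-term decomposition with the wavelet partial sum $\pi_{V_j}(f)$ at resolution $2^j\sim n^{1/3}$ in place of your mollification $f_K$, bounds the smooth part via the $O(Bn^{-2/3})$ estimate underlying Theorem~\ref{thm:bv} with $B\sim 2^{j(1-s)}$, and treats the two remainders exactly as you do (Cauchy--Schwarz with the $n^{-1/3}$ $L^2$-rate, and Donsker equicontinuity with envelopes tending to zero). The substitution of a mollifier for the wavelet projection and the slightly more flexible tuning $K_n=\lceil n^\gamma\rceil$ are immaterial differences.
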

The proof of Theorem~\ref{LIKkw2} will be given at the end of the paper.

\section{Discussion and applications}\label{sec:discussion}

The $n^{2/3}$-rate appearing in Theorem~\ref{thm:bv} and 
Corollary~\ref{cor:jump} is the pointwise rate at which the least concave 
majorant $\hat F_n$ converges to the empirical distribution function $F_n$. 
\cite{Wang1994} derived the pointwise limit theorem with a $n^{2/3}$-rate for 
$\hat F_n(t_0)$ at a point $t_0>0$ where $p_0$ has a negative derivative. 
Although our statement is of uniform nature we obtain the same rate in 
Theorem~1 and no additional logarithmic factor needs to be paid for the 
uniformity. 
A possible explanation is that the class of functions is adapted 
to the density $p_0$. We also note that Theorem~1 yields uniformity only over 
finitely many indicator functions even if $p_0$ has infinitely many 
discontinuities. The rate in the Kiefer--Wolfowitz theorem is $(n/\log 
n)^{2/3}$, which differs from our rate by a logarithmic factor. Indeed for 
bounding $\|\hat F_n-F_n\|_\infty$ this additional factor seems necessary at 
least it was shown by \cite{DurotTocquet2003} that it is necessary 
in the monotone regression 
framework.
Our results further differ from the Kiefer--Wolfowitz theorem in the sense that
the convergence is in probability, whereas the 
Kiefer--Wolfowitz theorem 
yields almost sure convergence. Our assumptions are 
weaker than in the Kiefer--Wolfowitz theorem since $p_0$ is neither assumed to 
be differentiable nor continuous. We require a strict curvature condition only 
in Theorem~\ref{LIKkw2}, but not in Theorem~\ref{thm:bv} or 
Corollary~\ref{cor:jump}.

Linear functionals of the Grenander estimator have also been studied 
by \cite{Jankowski2014} so let us discuss the differences in scope and in the 
assumptions between the results. 
A distinct feature of Theorems~\ref{thm:bv} and~\ref{LIKkw2} is that they are 
not for a fixed function $f$ but they are uniform in $f$ over classes of 
functions. 
\cite{Jankowski2014} takes a different perspective and emphasises the problem 
of possible misspecification, meaning that the true density $p_0$ 
does 
not necessarily need to be non-increasing. She distinguishes 
between curved and flat parts of $p_0$ (or in case of misspecification of its 
Kullback--Leibler projection) and assumes on the portion of support where $p_0$ 
is curved that $p_0$ is continuously differentiable and that $|p'_0|$ is 
bounded, which is used for the application of the Kiefer--Wolfowitz theorem in 
the proof.
The assumption that $p_0$ is continuously differentiable is widely used in the 
literature on the Grenander estimator so it is worthwhile to remark that 
Theorems~\ref{thm:bv} and~\ref{LIKkw2} do not require $p_0$ to be 
differentiable nor to be continuous.
The function $f$ defining the functional is assumed by \cite{Jankowski2014} to 
be differentiable on the curved parts, whereas Theorem~\ref{thm:bv} allows for 
discontinuities of $f$ at points where $p_0$ is discontinuous and 
Theorem~\ref{LIKkw2} only requires H\"older 
smoothness of order $s>1/2$ on the curved parts. \cite{Jankowski2014} assumes 
the function $f$ to be in $L^p$, $p>2$, on the flat 
parts, while Theorem~\ref{thm:bv} 
assumes $f$ to be constant
on the flat parts which in view of the results by 
\cite{Jankowski2014} is the natural condition to ensure a Gaussian limit. 
Theorem~\ref{LIKkw2} excludes flat parts by a strict curvature condition.
To summarise the comparison with Jankowski's results we can say that
our approach has the advantage of providing uniform 
results under low regularity assumptions on $p_0$ and requires 
stronger assumptions on $f$ on the flat parts while needing weaker assumptions 
on the curved parts.

As an application we present the estimation of sums of independent random 
variables~$X$ and~$Y$ with densities by $p_0$ and $q_0$, respectively. 
Let $X+Y=Z$ and the aim is the estimation of the density 
of $Z$. 
We observe either independent i.i.d. samples $X_1,\dots,X_n$ and 
$Y_1,\dots,Y_n$ of $X$ and $Y$, respectively, or in the special case, where $X$ 
and $Y$ have the same distribution, we only need one sample and can set 
$X_j=Y_j$ for $j=1,\dots,n$.
The random variable $Z$ has the density $p_0\ast q_0$ and the canonical 
estimator is $\hat p_n\ast \hat q_n$, where~$\hat p_n$ and~$\hat q_n$ are 
estimators of $p_0$ and $q_0$, respectively. 
For kernel estimators the convergence of $\hat p_n\ast \hat q_n-p_0\ast q_0$ in 
distribution in $L^1(\mathbb R)$ was shown by 
\cite{SchickWefelmeyer2004,SchickWefelmeyer2007} as well as
\cite{GineMason2007}.
\cite{Nickl2009} derives general conditions for the convergence of convolutions 
of density estimators which we verify by our uniform central limit theorems for 
the Grenander estimator. To this end let $p_0:\mathbb R\to[0,\infty)$ and 
$q_0:\mathbb R\to[0,\infty)$ be densities of random variables such that 
$p_0|_{[0,\infty)}$ and $q_0|_{[0,\infty)}$ satisfy the assumptions of 
Theorem~\ref{LIKkw2}. 
We denote by $\hat p_n$ and $\hat q_n$ the respective Grenander estimators.
We decompose
\begin{equation}
  \begin{aligned}\label{eq:decomp}
 \sqrt{n}(\hat p_n \ast \hat q_n - p_0\ast q_0)
&=\sqrt{n}(\hat p_n-p_0)\ast q_0+\sqrt{n}(\hat q_n-q_0)\ast p_0\\
&\quad+\sqrt{n}(\hat 
p_n-p_0)\ast(\hat q_n-q_0).
  \end{aligned}
\end{equation}
By Theorem~\ref{LIKkw2} we have uniform central limit theorems for $\hat P_n$ 
and $\hat Q_n$ in $\ell^\infty(\mathcal F)$ for balls $\mathcal F$ in the 
H\"older space $C^s$, $s>1/2$.
By Lemma~8(b) in \cite{GineNickl2008} we infer 
from $p_0$, $q_0$ being of bounded variation and in $L^1(\mathbb R)$ that 
$p_0,q_0\in B^1_{1\infty}(\mathbb R)\hookrightarrow B^s_{11}(\mathbb R)$ for 
$s<1$, where $B^s_{pq}(\mathbb R)$ are Besov spaces.
Together these two statements yield the convergence of $\sqrt{n}(\hat 
p_n-p_0)\ast q_0$ in distribution in~$L^1(\mathbb R)$ by Theorem~2 in 
\cite{Nickl2009} 
and likewise for the term $\sqrt{n}(\hat q_n-q_0)\ast p_0$.
To bound the last term in \eqref{eq:decomp} we use Young's inequality, the 
bounded support of $p_0$ and $q_0$ as well as Proposition~\ref{propRate} below
\begin{align*}
\|(\hat p_n-p_0)\ast(\hat q_n-q_0)\|_1
&\le \|\hat p_n-p_0\|_1 \|\hat q_n-q_0\|_1
\le C \|\hat p_n-p_0\|_2 \|\hat q_n-q_0\|_2\\
&=O_{P_0^\mathbb N}(n^{-2/3})
=o_{P_0^\mathbb N}(n^{-1/2}),
\end{align*}
where $C>0$ is some constant.
We conclude that \[\sqrt{n}(\hat p_n\ast \hat q_n-p_0\ast q_0)\]
converges in distribution in $L^1(\mathbb R)$.
By Remark~2 in \cite{Nickl2009} the limiting random variable can be determined 
by calculating for every $h\in L^\infty=(L^1)^\ast$ the limits of
\[\sqrt{n}\int h(x) d\Big((\hat P_n-P_0)\ast Q_0+(\hat Q_n-Q_0)\ast 
P_0\Big)(x).\]
Moreover, it follows from our Theorem~2 and from the continuous linear mapping 
established in the proof of Theorem~2 in \cite{Nickl2009} that the limiting 
random variable may likewise be determined by calculating for all $h\in 
L^\infty=(L^1)^\ast$ the limits of 
\begin{align*}
  &\sqrt{n}\int h(x) d\Big((P_n-P_0)\ast Q_0+(Q_n-Q_0)\ast P_0\Big)(x)\\
&=\sqrt{n}\left(\frac1{n}\sum_{j=1}^n 
g(X_j)-E_{P_0}g\right)+\sqrt{n}\left(\frac1{n}\sum_{j=1}^n 
f(Y_j)-E_{Q_0}f\right)
\end{align*} 
with $g=q_0(-\cdot)\ast h$ and $f=p_0(-\cdot)\ast h$. So the limit 
is a mean zero Gaussian random variable with an explicitly given covariance 
structure.

The uniform results can be interpreted in the context of
\cite{BickelRitov2003}. They coin the expression ``plug-in
estimator'' for a rate optimal estimator of
a density which simultaneously leads to the efficient estimation of functionals
with uniform convergence over a class of functionals. The Grenander estimator 
attains the optimal $n^{1/3}$-rate for non-increasing 
densities.
Since~$\mathcal P$ is a nonparametric model,
the empirical distribution 
function is an asymptotically efficient estimator of $P_0$ considered as an 
element $f\mapsto P_0 f$ of the space $\ell^\infty(\mathcal F)$ for a Donsker 
class $\mathcal F$, see \citet[p.~420]{vanderVaartWellner1996}. By our results 
$\hat P_n$ and $P_n$ are closer than $P_n$ and 
$P_0$ so that $\hat P_n$ is an asymptotically efficient estimator as well.
In summary our results show that the Grenander estimator 
is a plug-in estimator for a subclass of the bounded variation functions and 
for H\"older balls of smoothness $s>1/2$.

\section{The derivative of the likelihood function}\label{sec:deriv}

Many classical properties of maximum likelihood estimators $\hat \theta_n$ of
regular parameters $\theta \in \Theta \subset \mathbb R^p$, such as asymptotic
normality, are derived from the fact that the derivative of the log-likelihood
function vanishes at $\hat \theta_n$, 
\begin{equation} \label{LIKpmle}
\frac{\partial}{\partial \theta} \ell_n(\theta) _{|\hat \theta_n} =0.
\end{equation}
 This typically relies on the assumption that the true parameter $\theta_0$ is
interior to $\Theta$ so that by consistency $\hat \theta_n$ will then eventually
also be. In the infinite-dimensional setting, even if one can define an
appropriate notion of derivative, this approach is usually not viable since
$\hat p_n$ is \textit{never} an interior point in the parameter space even when
$p_0$ is. 

We now investigate these matters in more detail in the setting where $\mathcal
P$ consists of bounded probability densities. In this case we can compute the
Fr\'echet derivatives of the log-likelihood function on the space
$L^\infty=L^\infty([0,\infty))$ equipped with the $\|\cdot\|_\infty$-norm.
Recall that a real-valued function $L: U \to \mathbb R$ defined on an open
subset $U$ of a Banach space $B$ is Fr\'echet differentiable at  $f \in U$ if
\begin{equation}
\lim_{\|h\|_B \to 0} \frac{|L(f+h)-L(f)-DL(f)[h]|}{\|h\|_B} =0
\end{equation}
for some linear continuous map $DL(f): B \to \mathbb R$. If $g \in U$ is 
such
that the line segment $(1-t)f+tg, t \in (0,1),$ joining $f$ and $g$ lies in
$U$ (for instance if $U$ is convex) then the directional derivative of $L$ at
$f$ in the direction $g$ equals precisely  $$ \lim_{t \to 0+} \frac{L(f+
t(g-f))-L(f)}{t} = DL(f)[g-f].$$ 
The second order Fr\'echet derivatives are defined by taking the Fr\'echet 
derivative of $DL(f)[h]$ for a fixed direction $h$ and likewise higher order 
Fr\'echet derivatives are defined.
The following proposition shows that the log-likelihood function $\ell_n$ is
Fr\'echet differentiable on the open convex subset of $L^\infty$ consisting of
functions that are positive at the sample points. A similar result holds for
$\ell$ if one restricts to functions that are bounded away from zero on 
the support~$S_0$ of~$p_0$.
We recall here these results of Proposition~3 in \cite{Nickl2007}.

\begin{proposition} \label{LIKcalc}
For any finite set of points $x_1, \dots, x_n \in [0,\infty)$ define $$\mathcal
U(x_1, \dots, x_n) = \left\{f \in L^\infty([0,\infty)): \min_{1 \le i \le n} 
f(x_i)>0
\right\}$$ and $$\mathcal U = \left\{f \in L^\infty([0,\infty)): \inf_{x \in 
S_0}f(x)>0\right\}.$$ Then $\mathcal U(x_1, \dots, x_n)$ and $\mathcal U$ 
are 
open
subsets of $L^\infty([0,\infty))$.

Let $\ell_n$ be the log-likelihood function  from (\ref{LIKlog}) based on $X_1,
\dots, X_n \sim^{i.i.d.}P_0$, and denote by $P_n$ the empirical measure
associated with the sample. Let $\ell$ be as in (\ref{LIKllog}). For $\alpha \in
\mathbb N$ and $f_1, \dots, f_\alpha \in L^\infty([0,\infty))$ the $\alpha$-th 
Fr\'echet
derivatives of $\ell_n : \mathcal U(X_1, \dots, X_n) \to \mathbb R, ~\ell:
\mathcal U \to \mathbb R$ at a point $f \in \mathcal U(X_1, \dots, X_n),~f \in
\mathcal U$, respectively, are given by
\begin{align}
 D^\alpha\ell_n(f)[f_1, \dots, f_\alpha] &\equiv (-1)^{\alpha-1} (\alpha-1)!
P_n(f^{-\alpha}f_1 \cdots f_\alpha),\label{LIKcalcEmp}\\
D^\alpha\ell(f)[f_1, \dots, f_\alpha] &\equiv (-1)^{\alpha-1} (\alpha-1)!
P_0(f^{-\alpha}f_1 \cdots f_\alpha).\label{LIKcalcEqu} 
\end{align}
\end{proposition}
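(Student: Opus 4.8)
The plan is to dispatch the two openness claims by a one-line ball argument, to establish the first-order Fr\'echet derivative formula straight from the definition, and then to obtain all higher-order derivatives by an induction on $\alpha$ in which the Fr\'echet derivative of the map $f\mapsto D^{\alpha-1}\ell_n(f)[f_1,\dots,f_{\alpha-1}]$ is computed by differentiating the scalar functions $f\mapsto f(X_i)^{-(\alpha-1)}$ pointwise. \emph{Openness.} If $f\in\mathcal U(x_1,\dots,x_n)$, set $\delta=\min_{1\le i\le n}f(x_i)>0$; then any $g$ with $\|g-f\|_\infty<\delta$ has $g(x_i)\ge f(x_i)-\|g-f\|_\infty>0$ for every $i$, so the open $\|\cdot\|_\infty$-ball of radius $\delta$ about $f$ lies in $\mathcal U(x_1,\dots,x_n)$. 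The same argument with $\delta=\inf_{x\in S_0}f(x)>0$ shows $\mathcal U$ is open.

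\emph{First derivative.} Fix $f\in\mathcal U(X_1,\dots,X_n)$ and put $\delta=\min_i f(X_i)>0$. For $\|h\|_\infty<\delta/2$ one has $|h(X_i)/f(X_i)|<1/2$ for each $i$, and the elementary bound $|\log(1+u)-u|\le u^2$ for $|u|\le1/2$ gives $|\log(f(X_i)+h(X_i))-\log f(X_i)-h(X_i)/f(X_i)|\le(\|h\|_\infty/\delta)^2$. Averaging over the sample,
\[
\bigl|\ell_n(f+h)-\ell_n(f)-P_n(f^{-1}h)\bigr|\le\delta^{-2}\|h\|_\infty^2 ,
\]
and since $h\mapsto P_n(f^{-1}h)$ is linear with $|P_n(f^{-1}h)|\le\delta^{-1}\|h\|_\infty$, hence continuous, dividing by $\|h\|_\infty$ and letting $\|h\|_\infty\to0$ shows that $\ell_n$ is Fr\'echet differentiable at $f$ with $D\ell_n(f)[h]=P_n(f^{-1}h)$, which is \eqref{LIKcalcEmp} for $\alpha=1$.

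\emph{Induction.} Suppose \eqref{LIKcalcEmp} holds at order $\alpha-1$, so $f\mapsto D^{\alpha-1}\ell_n(f)[f_1,\dots,f_{\alpha-1}]=(-1)^{\alpha-2}(\alpha-2)!\,P_n\bigl(f^{-(\alpha-1)}f_1\cdots f_{\alpha-1}\bigr)$. Differentiating this in $f$ reduces to differentiating $f\mapsto f(X_i)^{-(\alpha-1)}$ for each fixed $i$; using $(a+b)^{-m}=a^{-m}-m\,a^{-m-1}b+O(b^2)$, uniformly for $a\ge\delta$ and $|b|\le\delta/2$, one gets that the Fr\'echet derivative in a further direction $f_\alpha$ exists, depends linearly and continuously on $f_\alpha$ (with norm at most $\delta^{-\alpha}\prod_{j<\alpha}\|f_j\|_\infty$), and equals $-(\alpha-1)\,P_n\bigl(f^{-\alpha}f_1\cdots f_{\alpha-1}f_\alpha\bigr)$. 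Collecting constants, $D^\alpha\ell_n(f)[f_1,\dots,f_\alpha]=(-1)^{\alpha-2}(\alpha-2)!\cdot(-(\alpha-1))\,P_n(f^{-\alpha}f_1\cdots f_\alpha)=(-1)^{\alpha-1}(\alpha-1)!\,P_n(f^{-\alpha}f_1\cdots f_\alpha)$, which is \eqref{LIKcalcEmp}. The statement for $\ell$ is obtained verbatim, replacing $P_n$ by $P_0$ and taking $\delta=\inf_{x\in S_0}f(x)>0$: then $\log f$ is bounded on $S_0$ (as $f\in L^\infty$), so $\ell(f)$ is well defined, and the remainders, bounded on $S_0$ by fixed multiples of $\|h\|_\infty^2$, are $O(\|h\|_\infty^2)$ after integration against the probability measure $P_0$, giving \eqref{LIKcalcEqu}.

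\emph{Where the work is.} Nothing here is deep — it is the computation underlying Proposition~3 of \cite{Nickl2007} — and the only point needing genuine care is the \emph{uniform} control of the second-order Taylor remainders of $\log$ and of $t\mapsto t^{-m}$: one must keep the perturbed argument $f+h$ bounded away from $0$ at every relevant point (the sample points, respectively all of $S_0$) so that the quadratic remainder is dominated by a fixed multiple of $\|h\|_\infty^2$ independent of the point. This is precisely what makes the averaged, respectively integrated, error $o(\|h\|_\infty)$ and hence yields Fr\'echet, not merely directional, differentiability, and it also supplies the $\delta^{-\alpha}$-type bounds needed for the continuity of the multilinear forms in \eqref{LIKcalcEmp} and \eqref{LIKcalcEqu}.
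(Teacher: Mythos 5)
Your proof is correct and is essentially the standard computation; the paper itself gives no proof of this proposition but simply recalls it from Proposition~3 of \cite{Nickl2007}, where the argument is exactly the one you give (openness via norm balls, first-order Taylor expansion of $\log$ with a uniform quadratic remainder thanks to $f$ being bounded away from zero at the relevant points, and induction for the higher-order multilinear forms). The only cosmetic caveat, which you inherit from the paper's own conventions, is that pointwise evaluation $f(x_i)$ and the set $\mathcal U(x_1,\dots,x_n)$ only make sense if $L^\infty([0,\infty))$ is read as bounded measurable functions with the genuine supremum norm rather than equivalence classes under the essential supremum.
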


\medskip

We deduce from the above proposition the intuitive fact that the limiting
log-likelihood function has a derivative at the true point $p_0>0$ that is zero
in all `tangent space' directions $h$ in
\begin{equation}
\mathcal H \equiv \left\{h: \int_{S_0} h=0\right\}
\end{equation}
since
\begin{equation} \label{LIKtan}
D\ell(p_0)[h] = \int_{S_0} p_0^{-1} h dP_0 = \int_{S_0} h =0.
\end{equation}
However, in the infinite-dimensional setting the empirical counterpart of
(\ref{LIKtan}),
\begin{equation} \label{LIKscoreh}
D\ell_n(\hat p_n)[h] =0
\end{equation}
for $h \in \mathcal H$ and $\hat p_n$ the nonparametric maximum likelihood
estimator is not true in general. Even if the set $\mathcal P$ the likelihood
was maximised over is contained in $\mathcal U(X_1, \dots, X_n)$ it will itself
in typical nonparametric situations have empty interior in $L^\infty$, and the
maximiser $\hat p_n$ will lie at the boundary
of $\mathcal P$. As a consequence we cannot expect that $\hat p_n$ is a zero of
$D\ell_n$.  

Following ideas in \cite{Nickl2007} we can circumvent this problem in some 
situations:
if the true value $p_0$
lies in the `interior' of $\mathcal P$ in the sense that local
$L^\infty$-perturbations of $p_0$ are contained in $\mathcal P \cap \mathcal U
(X_1, \dots, X_n)$, then we can bound $D\ell_n$ at $\hat p_n$.

\begin{lemma} \label{LIKtrick}
Let $\hat p_n$ be as in (\ref{LIK0}) and suppose that for some $h \in 
L^\infty([0,\infty)),
\eta>0,$ the line segment joining $\hat p_n$ and $p_0 \pm \eta h$ is contained
in $\mathcal P \cap \mathcal U(X_1, \dots, X_n)$. Then
\begin{equation} \label{LIKtrs}
|D\ell_n(\hat p_n)[h]| \leq (1/\eta) |D\ell_n(\hat p_n)[\hat p_n-p_0]|.
\end{equation}
\end{lemma}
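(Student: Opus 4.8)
The plan is to use that $\hat p_n$ maximises $\ell_n$ over the convex set $\mathcal P$, so that the one-sided directional derivative of $\ell_n$ at $\hat p_n$ towards any other point of $\mathcal P$ is $\le 0$; applying this at the two points $p_0 \pm \eta h$ and invoking linearity of $D\ell_n(\hat p_n)[\cdot]$ then yields the claimed inequality.

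First I would record that, since the segments joining $\hat p_n$ to $p_0 \pm \eta h$ are assumed to lie in $\mathcal P \cap \mathcal U(X_1,\dots,X_n)$, their endpoints $\hat p_n$ and $p_0 \pm \eta h$ all belong to $\mathcal U(X_1,\dots,X_n)$, so that $D\ell_n(\hat p_n)$ is well defined via Proposition~\ref{LIKcalc}, and moreover $p_0 \pm \eta h \in \mathcal P$. Next, for a fixed choice $q \in \{p_0+\eta h,\, p_0-\eta h\}$, convexity of $\mathcal P = \mathcal P^{mon}$ gives $(1-t)\hat p_n + tq \in \mathcal P$ for all $t \in [0,1]$, whence $\ell_n((1-t)\hat p_n + tq) \le \ell_n(\hat p_n)$ by the defining property (\ref{LIK0}) of $\hat p_n$. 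Dividing by $t>0$ and passing to the limit $t \to 0+$, the directional-derivative identity recalled in Section~\ref{sec:deriv} gives
\[
D\ell_n(\hat p_n)[q-\hat p_n] \le 0 .
\]

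Since $q - \hat p_n = (p_0 - \hat p_n) \pm \eta h$, linearity of $D\ell_n(\hat p_n)[\cdot]$ yields $D\ell_n(\hat p_n)[p_0-\hat p_n] \pm \eta\, D\ell_n(\hat p_n)[h] \le 0$, that is, $\pm\, \eta\, D\ell_n(\hat p_n)[h] \le D\ell_n(\hat p_n)[\hat p_n - p_0]$ for both choices of sign. Taking the larger of the two left-hand sides and dividing by $\eta>0$ gives $|D\ell_n(\hat p_n)[h]| \le (1/\eta)\, D\ell_n(\hat p_n)[\hat p_n - p_0] \le (1/\eta)\,|D\ell_n(\hat p_n)[\hat p_n - p_0]|$, which is exactly (\ref{LIKtrs}).

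I expect no serious obstacle: the statement is essentially the first-order optimality condition for a constrained maximum. The only point needing a little care is that one must combine convexity of $\mathcal P$ (so the whole chord from $\hat p_n$ to $p_0 \pm \eta h$ consists of admissible non-increasing densities) with the standing hypothesis that this chord stays inside $\mathcal U(X_1,\dots,X_n)$ (so the logarithms are finite along it and the one-sided derivative is legitimate). Note that concavity of $\ell_n$ is not required; one only uses that $t \mapsto \ell_n((1-t)\hat p_n + tq)$ is maximised at $t=0$.
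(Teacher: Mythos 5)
Your proof is correct and follows essentially the same route as the paper: the one-sided optimality condition $D\ell_n(\hat p_n)[p_0\pm\eta h-\hat p_n]\le 0$ at the constrained maximiser, followed by linearity and division by $\eta$. The only cosmetic difference is that you invoke convexity of $\mathcal P$ to place the chord inside $\mathcal P$, whereas this is already part of the lemma's hypothesis, so that step is superfluous but harmless.
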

\begin{proof}
Since $\hat p_n$ is a maximiser over $\mathcal P$ we deduce from
differentiability of $\ell_n$ on $\mathcal U(X_1, \dots, X_n)$ that the
derivative at $\hat p_n$ in the direction $p_0 + \eta h \in \mathcal P  \cap
\mathcal U(X_1, \dots, X_n)$ necessarily has to be nonpositive, that is
\begin{equation}
D\ell_n(\hat p_n)[p_0 + \eta h-\hat p_n] = \lim_{t \to 0+} \frac{\ell_n(\hat p_n
+ t(p_0+\eta h - \hat p_n)) - \ell_n(\hat p_n)}{t} \le 0
\end{equation}
or, by linearity of $D\ell_n(\hat p_n)[\cdot]$,
\begin{equation}
D\ell_n(\hat p_n)[ \eta h] \le D\ell_n(\hat p_n)[\hat p_n-p_0].
\end{equation}
Applying the same reasoning with $-\eta$ we see
\begin{equation}
|D\ell_n(\hat p_n)[ \eta h]| \le D\ell_n(\hat p_n)[\hat p_n-p_0] = |D
\ell_n(\hat p_n)[\hat p_n-p_0]|.
\end{equation}
Divide by $\eta$  to obtain the result.
\end{proof}

\medskip

The above lemma is interesting if we are able to show that $$D\ell_n(\hat
p_n)[\hat p_n-p_0]=o_{P_0^\mathbb
N}(1/\sqrt n),$$ as then the same rate bound carries over to
$D\ell_n(\hat p_n)[h]$. This can in turn be used to mimic the finite-dimensional
asymptotic normality proof of maximum likelihood estimators, which does not
require (\ref{LIKpmle}) but only that the score is of smaller stochastic order
of magnitude than $1/\sqrt n$. As a consequence we will be able to obtain the
asymptotic distribution of linear integral functionals of~$\hat p_n$, and more
generally, for $\hat P_n$ the probability measure associated with~$\hat p_n$,
central limit theorems for $\sqrt n (\hat P_n-P)$ in `empirical process - type'
spaces $\ell^\infty (\mathcal F)$. To understand this better we notice that
Proposition \ref{LIKcalc} implies the following relationships: If we define the
following projection of $f \in L^\infty$ onto $\mathcal H$,
\begin{equation} \label{LIKproj}
\pi_0(f) \equiv (f-P_0f)p_0 \in \mathcal H,~~P_0(f) = \int_{0}^{\infty} f dP_0,
\end{equation}
and if we assume $p_0>0$ on $S_0$ then
$$\int_0^\infty (\hat p_n-p_0)f dx = \int_{S_0}  p_0^{-2} (\hat p_n-p_0)
(f-P_0f)p_0 dP_0 =  -D^2 \ell(p_0)[\hat p_n-p_0, \pi_0(f)]$$ and
$$D\ell_n(p_0)[\pi_0(f)] = (P_n-P_0)f$$ so that:

\begin{lemma} \label{LIKclas} Suppose $p_0>0$ on $S_0$. Let $\hat p_n$ be as in
(\ref{LIK0}) and let $\hat P_n$ be the random probability measure induced by
$\hat p_n$. For any $f \in L^\infty([0,\infty))$ and $P_n$ the empirical 
measure we have
\begin{equation}
  \begin{aligned}
|(\hat P_n-P_n)(f)| &=\bigg|\int_0^\infty f d(\hat P_n - P_n)\bigg| \\
&=
|D\ell_n(p_0)[\pi_0(f)] + D^2 \ell(p_0)[\hat p_n-p_0, \pi_0(f)]|.
\end{aligned}
\end{equation}
\end{lemma}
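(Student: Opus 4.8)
Since the identity is, up to a sign, a bookkeeping rearrangement, the plan is simply to substitute the explicit derivative formulas of Proposition~\ref{LIKcalc} into the two terms on the right-hand side and to recognise the result. Before doing so I would record that $\pi_0(f)=(f-P_0f)p_0$ is an admissible direction, i.e.\ $\pi_0(f)\in\mathcal H$, since
\[
\int_{S_0}\pi_0(f)\,dx=\int_{S_0}fp_0\,dx-(P_0f)\int_{S_0}p_0\,dx=P_0f-P_0f=0 ,
\]
and that the derivatives in the statement are well defined: the observations lie in $S_0$ almost surely and $p_0>0$ there, so $p_0\in\mathcal U(X_1,\dots,X_n)$, while $p_0\in\mathcal U$ (here one uses that $p_0$ is bounded away from zero on $S_0$, as in the theorems), so Proposition~\ref{LIKcalc} applies at $f=p_0$.

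Next I would evaluate the two terms. Taking $\alpha=1$ in \eqref{LIKcalcEmp} and using $p_0^{-1}\pi_0(f)=f-P_0f$ wherever $p_0>0$,
\[
D\ell_n(p_0)[\pi_0(f)]=P_n\big(p_0^{-1}\pi_0(f)\big)=P_n(f-P_0f)=P_nf-P_0f=(P_n-P_0)f ,
\]
since $P_0f$ is a constant and $P_n$ a probability measure. Taking $\alpha=2$ in \eqref{LIKcalcEqu} and using $dP_0=p_0\,dx$ on $S_0$,
\[
D^2\ell(p_0)[\hat p_n-p_0,\pi_0(f)]=-P_0\big(p_0^{-2}(\hat p_n-p_0)\pi_0(f)\big)=-\int_{S_0}(\hat p_n-p_0)(f-P_0f)\,dx .
\]
Here I would invoke that the least concave majorant of $F_n$ is constant equal to $1$ beyond the largest observation, so $\hat p_n$ vanishes there; hence $\hat p_n$ and $p_0$ are both supported in $S_0$ and both integrate to $1$, giving $\int_{S_0}(\hat p_n-p_0)\,dx=0$. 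The additive constant $P_0f$ therefore drops out and
\[
D^2\ell(p_0)[\hat p_n-p_0,\pi_0(f)]=-\int_{S_0}(\hat p_n-p_0)f\,dx=-\int_0^\infty(\hat p_n-p_0)f\,dx .
\]

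Finally, writing $\hat P_nf=\int_0^\infty f\hat p_n\,dx$ and $\int_0^\infty fp_0\,dx=P_0f$,
\[
(\hat P_n-P_n)(f)=\int_0^\infty f(\hat p_n-p_0)\,dx-(P_nf-P_0f)=-D^2\ell(p_0)[\hat p_n-p_0,\pi_0(f)]-D\ell_n(p_0)[\pi_0(f)] ,
\]
and passing to absolute values gives the claim. There is no substantial obstacle here, as this lemma is a reformulation that exposes the right-hand side for the estimates of later sections; the only points needing a little care are the normalisation argument above (that $\hat p_n$ is supported in a bounded subinterval of $S_0$, so that $\int_{S_0}$ and $\int_0^\infty$ coincide for $\hat p_n$ and $p_0$ and the constant in $\pi_0(f)$ disappears from the second-order term) and verifying the prerequisites of Proposition~\ref{LIKcalc}.
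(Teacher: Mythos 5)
Your proof is correct and follows exactly the route the paper takes: the two identities you derive, namely $D\ell_n(p_0)[\pi_0(f)]=(P_n-P_0)f$ and $\int_0^\infty(\hat p_n-p_0)f\,dx=-D^2\ell(p_0)[\hat p_n-p_0,\pi_0(f)]$, are precisely the displayed computations the paper gives immediately before the lemma, and you supply the same supporting observations (substitution of the formulas from Proposition~\ref{LIKcalc}, $dP_0=p_0\,dx$ on $S_0$, and that $\hat p_n$ is supported in $[0,X_{(n)}]\subseteq S_0$ so that both densities integrate to one there and the constant $P_0f$ drops out).
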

Heuristically the right hand side equals, up to second order
\begin{equation} 
D\ell_n(\hat p_n)[\pi_0(f)] - D^2\ell_n(p_0)[\hat p_n - p_0, \pi_0(f)] + D^2
\ell(p_0)[\hat p_n-p_0, \pi_0(f)].
\end{equation}
Control of (\ref{LIKtrs}) at a rate $o_{P_0^\mathbb
N}(1/\sqrt n)$ combined with stochastic
bounds on the second centred log-likelihood derivatives and convergence rates
for $\hat p_n -p_0 \to 0$ thus give some hope that one may be able to prove
$$(\hat P_n-P_0 - P_n+P_0)(f) = (\hat P_n-P_n)(f) = o_{P_0^\mathbb
N}(1/\sqrt n)$$ and that
thus, by the central limit theorem for $(P_n-P_0)f$, $$\sqrt n \int_0^\infty
(\hat p_n-p_0)f dx \to^d N(0, P_0(f-P_0f)^2)$$ as $n \to \infty$.

\section{Bounding the estimator and \texorpdfstring{$L^2$}{L\texttwosuperior\ 
}-convergence rate}\label{sec:bounds}

We establish some first probabilistic properties of $\hat p_n$ that will be
useful below: If $p_0$ is bounded away from zero on $S_0$ then so is $\hat p_n$ 
on the
interval $[0,X_{(n)}]$, where $X_{(n)}$ is the last order statistic. Similarly
if $p_0$ is bounded above then so is $\hat p_n$ with high probability.

\begin{lemma} \label{LIKlow}
a) Suppose the true density $p_0$ has compact support $S_0$ and that 
$\inf_{x \in 
S_0}p_0(x)\ge\zeta>0$. Then,
for every $\epsilon >0$, there exists $\xi >0$ and a finite index $N(\epsilon)$
such that, for all $n \geq N(\epsilon)$, 
\begin{equation*}
P_0^\mathbb N \left(\inf_{x \in [0,X_{(n)}]} \hat p_n(x) < \xi \right) = \Pr
\left(\hat p_n(X_{(n)}) < \xi \right) < \epsilon.
\end{equation*}
b) Suppose the true density $p_0$ satisfies $p_0(x) \leq K < \infty$ for all $x
\ge0$. Then, for every $\epsilon >0$, there exists $0<k< \infty$ such
that for all $n\in\mathbb N$ 
\begin{equation*}
P_0^\mathbb N \left(\sup_{x \ge0} \hat p_n(x) > k \right) = \Pr \left(\hat
p_n(0) > k \right) < \epsilon.
\end{equation*}
\end{lemma}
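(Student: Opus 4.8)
The plan is to reduce both statements to elementary tail bounds for order statistics by exploiting the geometry of the least concave majorant $\hat F_n$. First I would record two reductions. Since $F_n\le 1$ and the constant function $1$ is a concave majorant of $F_n$, we have $\hat F_n\le 1$; as $\hat F_n\ge F_n=1$ on $[X_{(n)},\infty)$ this forces $\hat F_n\equiv 1$, hence $\hat p_n\equiv 0$, on $(X_{(n)},\infty)$. Together with the monotonicity of $\hat p_n$ this gives $\inf_{x\in[0,X_{(n)}]}\hat p_n(x)=\hat p_n(X_{(n)})$ and $\sup_{x\ge 0}\hat p_n(x)=\hat p_n(0)$, which are the equalities asserted in the displays; everything is thus reduced to the two endpoint values. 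The one genuinely structural input is that these admit extremal representations: because any affine function dominating $F_n$ is a concave majorant of $F_n$ and therefore dominates $\hat F_n$, a line through $(X_{(n)},1)$ of slope $s$ dominates $F_n$ if and only if it dominates $\hat F_n$, i.e.\ if and only if $0\le s\le\hat p_n(X_{(n)})$; spelling out the first condition yields
\[
\hat p_n(X_{(n)})=\min_{0\le x<X_{(n)}}\frac{1-F_n(x)}{X_{(n)}-x}=\min_{0\le i\le n-1}\frac{n-i}{n\,(X_{(n)}-X_{(i)})},\qquad X_{(0)}:=0,
\]
and, symmetrically, using lines through the origin,
\[
\hat p_n(0)=\sup_{x>0}\frac{F_n(x)}{x}=\max_{1\le i\le n}\frac{i}{n\,X_{(i)}}.
\]

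For part~(a), fix $\xi\in(0,\zeta/4]$. From the first representation, $\{\hat p_n(X_{(n)})<\xi\}=\bigcup_{k=1}^{n}\{X_{(n)}-X_{(n-k)}>\tfrac{k}{n\xi}\}$, and since $X_i\in[0,\alpha_1]$ a.s.\ and $\alpha_1\le 1/\zeta$ the terms with $k>n\xi\alpha_1$ are a.s.\ empty. For the remaining $k$ I would condition on $X_{(n)}=a$ (recall $P_0$ is non-atomic), so that the other $n-1$ observations are i.i.d.\ from $P_0$ restricted to $[0,a]$ and the event asks that at most $k-1$ of them lie in $[a-\tfrac{k}{n\xi},a]$; its probability is at most $\Pr(\mathrm{Bin}(n-1,q)\le k-1)$ with $q\ge \zeta k/(n\xi)$, because $p_0\ge\zeta$ on $[0,\alpha_1]\supseteq[a-\tfrac{k}{n\xi},a]$ for every $a$ making the event non-empty. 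The choice $\xi\le\zeta/4$ makes $k-1\le\tfrac12(n-1)q$, so a Chernoff lower-tail bound gives a bound of the form $e^{-c\zeta k/\xi}$ with an absolute constant $c>0$, uniformly in $a$ and in $n$; summing the resulting geometric series over $k\ge 1$ yields a bound that tends to $0$ as $\xi\downarrow 0$, hence is $<\epsilon$ once $\xi$ is chosen small.

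For part~(b), $\{\hat p_n(0)>\lambda\}=\bigcup_{i=1}^{n}\{X_{(i)}<\tfrac{i}{n\lambda}\}$, and $\Pr(X_{(i)}<\tfrac{i}{n\lambda})=\Pr(\mathrm{Bin}(n,F_0(\tfrac{i}{n\lambda}))\ge i)\le\Pr(\mathrm{Bin}(n,\tfrac{Ki}{n\lambda})\ge i)$, using $F_0(t)\le Kt$. For $\lambda\ge K$ the binomial mean is at most $i$, so the Chernoff upper-tail bound $\Pr(\mathrm{Bin}(N,p)\ge t)\le(eNp/t)^t$ (valid for $t\ge Np$) gives $(eK/\lambda)^i$; summing over $i\ge 1$ gives $\Pr(\hat p_n(0)>\lambda)\le eK/(\lambda-eK)$ for $\lambda>eK$, which is $<\epsilon$ for $\lambda$ large, uniformly in $n$.

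The main obstacle is the behaviour near $X_{(n)}$ in part~(a): there $X_{(n)}-x$ and $1-F_n(x)$ are both of order $1/n$, so a uniform Glivenko--Cantelli or DKW estimate is too weak and only a local, order-statistic (equivalently, small-interval relative-deviation) argument controls the ratio. Everything else is routine binomial-tail bookkeeping; the two extremal representations above, and in particular the care needed to see that ``dominating $F_n$'' and ``dominating $\hat F_n$'' coincide for the relevant pencils of lines, are where the substance of the argument lies.
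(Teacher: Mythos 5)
Your proof is correct. The geometric reduction coincides with the paper's: both arguments identify $\{\hat p_n(X_{(n)})<\xi\}$ with $\{X_{(n)}-X_{(n-k)}>k/(n\xi)\ \text{for some}\ k\}$ and $\{\hat p_n(0)>\lambda\}$ with $\{F_n(t)>\lambda t\ \text{for some}\ t\}$, and your supporting-line justification of these identities is sound (indeed more explicit than the paper's). Where you genuinely diverge is in the tail estimates. For part (a) the paper maps the spacings to uniform order statistics through the $1/\zeta$-Lipschitz inverse of $F_0$, invokes the R\'enyi representation $U_{(i)}=Z_i/Z_{n+1}$ with i.i.d.\ exponential increments, peels off a law-of-large-numbers event for $n/Z_{n+1}$ --- which is exactly why its conclusion only holds for $n\ge N(\epsilon)$ --- and bounds the remaining union by fourth-moment Markov via the Hoffmann--J\o rgensen inequality. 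You instead condition on $X_{(n)}=a$ and apply a binomial lower-tail Chernoff bound to the number of points in $[a-k/(n\xi),a]$, using $p_0\ge\zeta$ to lower-bound the success probability; this is more elementary, dispenses with Hoffmann--J\o rgensen, and gives a bound uniform in $n$, so the index $N(\epsilon)$ becomes superfluous in your version. For part (b) the paper evaluates $\Pr\bigl(\sup_t F_n^U(t)/t>C\bigr)$ exactly as $1/C$ by an explicit iterated integral (the classical Daniels-type identity), while your union bound with $\binom{n}{i}p^i\le(enp/i)^i$ yields the cruder but fully sufficient $eK/(\lambda-eK)$. Both routes prove the lemma; yours trades the paper's exact constants for shorter, self-contained binomial bookkeeping and a part (a) that holds for every $n$.
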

\begin{proof}
a) The first equality is obvious, since $\hat p_n$ is monotone decreasing. 
Let $X_{(1)},\dots,X_{(n)}$ denote the order statistic of $X_1,\dots,X_n$.
On
each of the intervals $(X_{(j-1)}, X_{(j)}]$, $f_n$ is the slope of the least 
concave
majorant of $F_n$. The least concave majorant connects $(X_{(n)},1)$
and at least one other order statistic (possibly $(X_{(0)},0)\equiv(0,0)$), so 
that
$$\{\hat
p_n(X_{(n)}) < \xi\} \subseteq \left\{X_{(n)}-X_{(n-j)} > j/(\xi n)
~~\textrm{for some}~~ j=1,\dots,n \right \}.$$
Note next that  since $F_0$ is strictly monotone on $S_0$ we have $X_{i} =
F_0|_{S_0}^{-1}F_0(X_i)$ and 
\begin{align*}
  F_0|_{S_0}^{-1}F_0(X_{(n)}) - 
F_0|_{S_0}^{-1}F_0(X_{(n-j)}) 
&\leq 
(\inf_{x \in 
S_0}p_0(x))^{-1} \left(F_0(X_{(n)}) - F_0(X_{(n-j)})\right) \\
&\leq \zeta^{-1}
\left(U_{(n)}-U_{(n-j)}\right),
\end{align*}
where the $U_{(i)}$'s are distributed as the
order statistics of a sample of size $n$ of a uniform random variable on
$[0,1]$, and where $U_{(0)}=0$ by convention. Hence it suffices to bound 
\begin{equation} \label{unif}
\Pr \left(U_{(n)}-U_{(n-j)} > \frac{\zeta j}{\xi n} ~~\textrm{for some}~~
j=1,\dots,n \right).
\end{equation}
By a standard computation involving order statistics, the joint
distribution of $U_{(i)}$, $i=1,\dots,n$, is the same as the one of 
$Z_i/Z_{n+1}$
where $Z_n = \sum_{l=1}^n W_l$ and where $W_l$ are independent standard
exponential random variables. Consequently, for $\delta >0$, the probability in
(\ref{unif}) is bounded by 
\begin{align*}
& \Pr \left( \frac{W_{n-j+1}+\dots+W_n}{Z_{n+1}} > \frac{\zeta j}{\xi n}
~~\textrm{for some}~~ j \right) \\
&= \Pr \left( \frac{n}{Z_{n+1}} \frac{W_{n-j+1}+\dots+W_n}{n} > \frac{\zeta
j}{\xi n} ~~\textrm{for some}~~ j \right) \\
&\le \Pr (n/Z_{n+1} > 1 + \delta) + \Pr \left(\frac{W_{n-j+1}+\dots+W_n}{n} >
\frac{\zeta j}{\xi n (1+\delta)} ~~\textrm{for some}~~ j  \right) \\
& = A +B.
\end{align*}
To bound A, note that it is equal to $$\Pr \left(\frac{1}{n+1} \sum_{l=1}^{n+1}
(W_l-EW_l) < \frac{- \delta -(1+\delta)/n}{1+\delta} \frac{n}{n+1} \right),$$
which, since $\delta>0$, is less than $\epsilon/2>0$ arbitrary, from some $n$
onwards, by the law of large numbers. For the term B we have, for $\xi$ small
enough and by Markov's inequality
\begin{eqnarray*}
&& \Pr \left(W_{n-j+1}+\dots+W_n > \frac{\zeta j}{\xi (1+\delta)} ~~\textrm{for
some}~~ j  \right) \\
&& \le \sum_{j=1}^n \Pr \left(W_{n-j+1}+\dots+W_n > \frac{\zeta j}{\xi 
(1+\delta)}
\right) \\
&&= \sum_{j=1}^n \Pr \left(\sum_{l=1}^j ( W_{n-l+1} - EW_{n-l+1}) > \frac{\zeta
j}{\xi (1+\delta)} -j \right) \\
&& \le \sum_{j=1}^n \frac{\xi^4 E (\sum_{l=1}^j (W_{n-l+1} - EW_{n-l+1}))^4}{j^4
C(\delta, \zeta)} \\
&& \le \xi^4 C'(\delta, \zeta) \sum_{j=1}^n j^{-2} \le \xi^4 C''(\delta, \zeta)
<
\epsilon/2,
\end{eqnarray*}
since, for $Y_l=W_{n-l+1} - EW_{n-l+1}$, by Hoffmann-J\o rgensen's inequality
\citep[Corollary~1.2.7]{delaPenaGine1999} $$\left \|
\sum_{l=1}^j Y_l \right \|_{4,P} \le K \left[ \left \|\sum_{l=1}^j Y_l \right
\|_{2,P} + \left \| \max_l Y_l \right \|_{4,P}  \right] \le K' \left(\sqrt {j} +
j^{1/4} \right), $$ using the fact that $EW_1^p = p!$ and $Var (Y_1) = 1$. 

b) Since $\hat p_n$ is the left derivative of the least
concave majorant of the empirical distribution~$F_n$, $$\|\hat
p_n\|_\infty = \hat p_n(0) >M ~\iff~F_n(t)>Mt~~\text{for some } t.$$ Since $F_0$
is concave and continuous it maps $[0,\infty)$ onto $[0,1]$ and satisfies 
$F_0(t) \le
p_0(0)t \le t\|p_0\|_\infty$ so that we obtain
\begin{equation}
  \begin{aligned}\label{uniformDist}
 P^\mathbb N_0(\|\hat p_n\|_\infty >M) &\le P_0^\mathbb N
\left(\sup_{t>0}\frac{F_n(t)}{F_0(t)} >M/\|p_0\|_\infty \right) \\
&=  
P_0^\mathbb
N
\left(\sup_{t>0}\frac{F^U_n(t)}{t} >M/\|p_0\|_\infty \right),
  \end{aligned}
\end{equation}
where $F_n^U$ is the empirical distribution function based on a sample of size
$n$ from the uniform distribution.
The density of the order statistic of $n$ uniform $U(0,1)$ random variables is
$n!$ on the set of all $0\le x_1<\dots<x_n\le 1$.
Let
$M\ge\|p_0\|_\infty$ and set $C\equiv M/\|p_0\|_\infty$ then the complement 
of the event in \eqref{uniformDist}
has
the
probability
\begin{align*}
& P_0^\mathbb N
\left(\frac{F^U_n(t)}{t} \le C \quad\forall t\in[0,1]\right)\\
&=n!\int_{1/C}^{1}\int_{(n-1)/(nC)}^{x_n}\dots\int_{1/(nC)}^{x_2}dx_1\dots
dx_{n-1 }dx_n\\
&=n!\int_{1/C}^{1}\dots
\int_{j/(nC)}^{x_{j+1}}\frac{1}{(j-1)!}x_j^{j-1}-\frac{1}{nC}\frac{1}{(j-2)!}
x_j^{(j-2)}dx_j
\dots
dx_n\\
&=1-\frac{1}{C},
\end{align*}
where $j=2,\dots,n-1$. In particular, the probability in \eqref{uniformDist}
equals $\|p_0\|_\infty/M$ and can be made small by choosing $M$ large. 
\end{proof}

\medskip

We can now derive the rate of convergence of the maximum likelihood estimator of
a monotone density. The rate corresponds to functions that are once
differentiable in an $L^1$-sense, which is intuitively correct since a monotone
decreasing function has a weak derivative that is a finite signed measure.
The following convergence rate in Hellinger distance is given
in Example~7.4.2 by \cite{vandeGeer2000}. It is used here to derive the
$L^2$-convergence rate.
\cite{KulikovLopuhaae2005} prove a much finer result by deriving the asymptotic
distribution of the $L^p$-errors under stronger smoothness assumptions. 
\cite{GaoWellner2009} consider the maximum likelihood estimator of a 
$k$-monotone density on a bounded interval and extend Lemma \ref{LIKlow}b) 
and the Hellinger convergence rate in the following proposition to this setting.
We recall that the Hellinger distance between two Lebesgue densities $p$ and 
$q$ 
is defined by
\[
 h^2(p,q)=\frac1{2}\int\left(p^{1/2}(x)-q^{1/2}(x)\right)^2 
dx.
\]

\begin{proposition}\label{propRate}
Suppose $p_0 \in \mathcal P^{mon}$ and that $p_0$ is bounded and has bounded 
support. Let $\hat p_n$
satisfy (\ref{LIK0}). Then 
\begin{align}
 h(\hat p_n, p_0)=O_{P_0^\mathbb N}(n^{-1/3})
\end{align}
and also 
\begin{align}\label{LIKgrenrat}
 \|\hat p_n- p_0\|_2=O_{P_0^\mathbb N}(n^{-1/3}).
\end{align}
\end{proposition}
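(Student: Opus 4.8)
The plan is to derive the $L^2$-rate \eqref{LIKgrenrat} from the Hellinger rate $h(\hat p_n,p_0)=O_{P_0^\mathbb N}(n^{-1/3})$, which I take as given (Example~7.4.2 in \cite{vandeGeer2000}), together with the boundedness properties of $\hat p_n$ established in Lemma~\ref{LIKlow}. The key elementary inequality is that for nonnegative Lebesgue densities $p,q$ one has the pointwise bound $(p^{1/2}-q^{1/2})^2 \ge (p-q)^2 / (p^{1/2}+q^{1/2})^2$, hence
\begin{align*}
\|p-q\|_2^2 = \int (p-q)^2\,dx \le \Big(\esssup (p^{1/2}+q^{1/2})^2\Big)\int (p^{1/2}-q^{1/2})^2\,dx = 2\big\|p^{1/2}+q^{1/2}\big\|_\infty^2\, h^2(p,q).
\end{align*}
So if I can control $\|\hat p_n\|_\infty$ (and hence $\|\hat p_n^{1/2}+p_0^{1/2}\|_\infty$) in probability, then $\|\hat p_n-p_0\|_2 \lesssim \|\hat p_n^{1/2}+p_0^{1/2}\|_\infty\, h(\hat p_n,p_0)$ transfers the Hellinger rate to the $L^2$-rate.

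First I would invoke Lemma~\ref{LIKlow}b): since $p_0$ is bounded, say $p_0(x)\le K<\infty$ for all $x\ge 0$, for every $\ve>0$ there is $0<k<\infty$ with $P_0^{\mathbb N}(\sup_{x\ge 0}\hat p_n(x)>k)<\ve$ for all $n$. On the complementary event $\{\|\hat p_n\|_\infty \le k\}$ we have $\|\hat p_n^{1/2}+p_0^{1/2}\|_\infty \le \sqrt k + \sqrt K$, so the displayed inequality gives $\|\hat p_n-p_0\|_2 \le \sqrt 2(\sqrt k+\sqrt K)\, h(\hat p_n,p_0)$ on that event. Next I would combine this with the Hellinger rate: for every $\ve>0$ choose $k=k(\ve/2)$ as above and $L=L(\ve/2)$ so that $P_0^{\mathbb N}(h(\hat p_n,p_0)>Ln^{-1/3})<\ve/2$ for all $n$ large; then on the intersection of the two events, which has probability at least $1-\ve$, we get $\|\hat p_n-p_0\|_2 \le \sqrt 2(\sqrt k+\sqrt K)L\, n^{-1/3}$. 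Since $\ve>0$ was arbitrary, this is exactly the definition of $\|\hat p_n-p_0\|_2 = O_{P_0^{\mathbb N}}(n^{-1/3})$.

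The only genuinely non-routine point is making sure the constant multiplying the Hellinger distance is \emph{stochastically} bounded rather than merely finite on an event of probability one; this is precisely what Lemma~\ref{LIKlow}b) supplies, and the uniformity in $n$ in that lemma is what lets the two ``bad event'' probabilities be controlled simultaneously. One should note that $p_0$ having bounded support is not even needed for this step (it enters only in securing the Hellinger rate itself from \cite{vandeGeer2000}); the $L^2$-transfer uses only the uniform upper bound on $\hat p_n$ and on $p_0$. I would also remark that no lower bound on $\hat p_n$ or $p_0$ is required here, so Lemma~\ref{LIKlow}a) plays no role in this particular proposition. This completes the proof of Proposition~\ref{propRate} modulo the cited Hellinger rate.
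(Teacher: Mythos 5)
Your proposal is correct and follows essentially the same route as the paper: the Hellinger rate is imported from van de Geer's Example~7.4.2, and the $L^2$ rate is deduced via the identity $(p-q)^2=(p^{1/2}-q^{1/2})^2(p^{1/2}+q^{1/2})^2$ together with the stochastic upper bound on $\|\hat p_n\|_\infty$ from Lemma~\ref{LIKlow}b). Your remarks on the intersection-of-events bookkeeping and on which hypotheses are actually used are accurate but routine.
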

\begin{proof}
Since $p_0$ is bounded and has bounded support the statement for the Hellinger 
distance is contained in Example~7.4.2 by~\cite{vandeGeer2000}.
The density~$p_0$ is bounded by assumption and we have
$\|\hat p_n\|_\infty=O_{P_0^\mathbb N}(1)$ by Lemma~\ref{LIKlow}b).
Then the result in
$L^2$-distance follows by the bound
\begin{align*}
 \|\hat 
p_n-p_0\|_2^2&\le\int\left(\hat p_n^{1/2}(x)-p_0^{1/2}(x)\right)^2\left(\hat 
p_n^{1/2} (x)+p_0^ { 1/2 } (x)\right)^2 dx\\
&\le2(\|\hat p_n\|_\infty^{1/2}+\|p_0\|_\infty^{1/2})^2h^2(\hat p_n,p_0)\\
&\le4(\|\hat p_n\|_\infty+\|p_0\|_\infty)h^2(\hat p_n,p_0).
\end{align*}
\end{proof}

\section{Putting things together}\label{sec:proofs}

The maximiser
$\hat p_n$ is in some sense an object that lives on the boundary of $\mathcal P$
-- it is piecewise constant with step-discontinuities at the observation points,
exhausting the possible `roughness' of a monotone function. 

We can construct line segments in the parameter space through $p_0$, following
the philosophy of Lemma \ref{LIKtrick}. 
Let $p_0$ be a non-increasing density with compact support $S_0$,  
$\inf_{x\in S_0}p_0(x) \ge \zeta >0$ and weak derivative $D p_0$.
In order to ensure that the perturbed function lies again in $\mathcal P$ we
will perturb $p_0$ by $\eta h$ where $h\in
L^\infty$, $\text{supp}(h)\subseteq S_0$, $\int h=0$ and $Dh$ is absolutely
continuous with respect to $Dp_0$ such that the Radon--Nikodym density
satisfies $\|Dh/Dp_0\|_{\infty,Dp_0}<\infty$. Then we have indeed for $\eta$ of
absolute value small enough
\begin{equation} \label{LIKint2}
\inf_{x\in S_0}(p_0 + \eta h)(x) \ge \zeta - \eta \|h\|_\infty > 0, 
~~~~\int_0^1 (p_0 + \eta h) =1,
\end{equation}
and that $D(p_0+\eta h)= Dp_0 + \eta Dh$ is a negative measure. We change 
$p_0+\eta h$ on a nullset so that it is equal to the integral of $Dp_0 + \eta 
Dh$ everywhere and thus is a non-increasing function.
Similar statements hold if we replace
$h$ by $\pi_0(f)$ defined in \eqref{LIKproj} when
$\|f\|_\infty+\|Df/Dp_0\|_{\infty,Dp_0}$ is finite. We possibly modify
$p_0+\eta\pi_0(f)$ on a nullset so that it equals the integral of its weak 
derivative.
\begin{lemma}\label{LIKpert}
Let $p_0$ be non-increasing and have bounded support $S_0$ with $K\ge 
p_0(x)\ge\zeta>0$ for all $x\in S_0$.
Let $f$ be such
that $\|f\|_\infty+\|Df/Dp_0\|_{\infty,Dp_0}$ is finite. Then we have
$p_0+\eta\pi_0(f)\in \mathcal P\cap \mathcal U$ for $|\eta|\le c
(\|f\|_\infty+\|Df/Dp_0\|_{\infty,Dp_0})^{-1}$, where $c>0$ depends on $K$ and
$\zeta$ only.
\end{lemma}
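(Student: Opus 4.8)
The goal of Lemma~\ref{LIKpert} is to show that perturbing $p_0$ in the direction $\pi_0(f)=(f-P_0f)p_0$ keeps us inside $\mathcal P\cap\mathcal U$, with a perturbation size $\eta$ controlled purely by $K$, $\zeta$ and the norm $N\equiv\|f\|_\infty+\|Df/Dp_0\|_{\infty,Dp_0}$. The plan is to verify the three defining requirements of $\mathcal P\cap\mathcal U$ in turn: (i) $p_0+\eta\pi_0(f)$ is bounded away from zero on $S_0$ (hence in $\mathcal U$), (ii) it integrates to $1$, and (iii) it is non-increasing, i.e.\ its weak derivative is a non-positive measure.

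First I would record the pointwise bounds on $\pi_0(f)$. Since $|P_0f|\le\|f\|_\infty$ we get $\|\pi_0(f)\|_\infty\le 2\|f\|_\infty\,\|p_0\|_\infty\le 2KN$, so choosing $|\eta|$ small compared to $\zeta/(KN)$ gives $\inf_{x\in S_0}(p_0+\eta\pi_0(f))(x)\ge\zeta-|\eta|\cdot 2KN>0$, which settles (i). For (ii), $\int_{S_0}\pi_0(f)\,dx=\int_{S_0}(f-P_0f)p_0\,dx=P_0f-P_0f=0$, so $\int(p_0+\eta\pi_0(f))=1$ regardless of $\eta$; note $\pi_0(f)$ is supported in $S_0$ by construction so the integral over $[0,\infty)$ agrees with that over $S_0$. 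The substantive point is (iii): I need to compute the weak derivative $D\pi_0(f)$ and show $D(p_0+\eta\pi_0(f))=Dp_0+\eta D\pi_0(f)$ is non-positive for $|\eta|$ small. By the Leibniz rule for $BV$ functions, writing $\pi_0(f)=(f-P_0f)p_0$, the derivative splits as $D\pi_0(f)=(f-P_0f)\,Dp_0+p_0\,Df$ in the appropriate sense (with the product $(f-P_0f)Dp_0$ interpreted using a pointwise representative and $p_0\,Df$ likewise; since $f$ and $p_0$ are both $BV$ and bounded this product rule holds, modulo the usual care about the diffuse and atomic parts — one should really write it as $D\pi_0(f)= (\widetilde{f-P_0f})\,Dp_0 + \widetilde{p_0}\,Df$ for suitable representatives). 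Then
\begin{align*}
Dp_0+\eta D\pi_0(f) &= \big(1+\eta(f-P_0f)\big)Dp_0 + \eta p_0\,Df.
\end{align*}
Using absolute continuity of $Df$ with respect to $Dp_0$ we write $Df=(Df/Dp_0)\,Dp_0$, so the whole expression equals $\big(1+\eta(f-P_0f)+\eta p_0\,(Df/Dp_0)\big)Dp_0$. For $|\eta|\le c N^{-1}$ with $c$ depending only on $K,\zeta$, the scalar factor satisfies $1+\eta(f-P_0f)+\eta p_0(Df/Dp_0)\ge 1-|\eta|(2\|f\|_\infty+K\|Df/Dp_0\|_{\infty,Dp_0})\ge 1-|\eta|\,\max(2,K)\,N>0$ $Dp_0$-almost everywhere, and since $Dp_0$ is a non-positive measure, multiplying it by a non-negative function keeps it non-positive. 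Hence $p_0+\eta\pi_0(f)$ is non-increasing after modification on a $\lambda$-null set (as flagged in the discussion preceding the lemma), and the three conditions together give membership in $\mathcal P\cap\mathcal U$.

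The main obstacle is the Leibniz/product-rule step: one must justify the formula for $D\big((f-P_0f)p_0\big)$ rigorously for two bounded $BV$ functions, keeping track of which representatives appear multiplying the measures (so that, e.g., at a common jump point of $f$ and $p_0$ the product of jumps is handled consistently) and confirming that $Df\ll Dp_0$ is genuinely inherited — i.e.\ that the atomic part of $Df$ sits at atoms of $Dp_0$ and the diffuse part is dominated on the diffuse part of $Dp_0$, which is exactly what $\|Df/Dp_0\|_{\infty,Dp_0}<\infty$ encodes. Once the derivative identity is in hand, extracting the explicit constant $c=c(K,\zeta)$ is a one-line estimate: $c$ can be taken to be, say, $\tfrac12\min\big(\zeta/(2K),1/\max(2,K)\big)$, ensuring simultaneously that the infimum bound in (i) and the positivity of the scalar factor in (iii) hold. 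I would also remark that $\pi_0(f)\in L^\infty$ with support in $S_0$ is immediate from the definition, so no separate argument is needed for that.
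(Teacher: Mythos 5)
Your proposal is correct and follows essentially the same route as the paper: the same bound $\|\pi_0(f)\|_\infty\le 2K\|f\|_\infty$ for membership in $\mathcal U$, the same product-rule identity $D(p_0+\eta\pi_0(f))=(1+\eta(f-P_0f))Dp_0+\eta p_0Df$, and the same reduction via $Df=(Df/Dp_0)Dp_0$ to the condition $(2\|f\|_\infty+K\|Df/Dp_0\|_{\infty,Dp_0})|\eta|\le1$, yielding the same constant $c$. The Leibniz-rule subtlety you flag is treated at the same level of detail (i.e.\ asserted) in the paper's own proof, so nothing is missing relative to it.
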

\begin{proof}
$\pi_0(f)=(f-P_0 f)p_0$ is bounded by $2K\|f\|_\infty$. The assumption
$p_0(x)\ge\zeta>0$ for all $x\in S_0$ yields $p_0+\eta\pi_0(f)\in \mathcal U$ 
for $|\eta|<
\zeta/(2K\|f\|_\infty)$. In addition to
$|\eta|<\zeta/(2K\|f\|_\infty)$ we will choose $\eta$ small enough such that
\begin{align*}
 D(p_0+\eta\pi_0(f))=(1-\eta P_0 f+\eta f)Dp_0+\eta p_0Df
\end{align*}
is a negative measure.
This is the case if 
\begin{align*}
 \frac{K|\eta| \|Df/Dp_0\|_{\infty,Dp_0}}{1-2|\eta| \|f\|_\infty}\le1
\quad
\Leftrightarrow \quad
(K
\|Df/Dp_0\|_{\infty,Dp_0}+2\|f\|_\infty)|\eta|\le1,
\end{align*}
which holds for $|\eta|\le
(\max(2,K)(\|f\|_\infty+\|Df/Dp_0\|_{\infty,Dp_0}))^{-1}$.
\end{proof}

For $p_0$ and $f$ as above we can apply Lemma~\ref{LIKtrick} with $h=\pi_0(f)$, 
where the line segment between $\hat p_n$ and $p_0\pm \eta \pi_0(f)$ being in 
$\mathcal P\cap\mathcal U(X_1,\dots,X_n)$ is guaranteed by Lemma~\ref{LIKlow}a) 
provided $p_0\pm \eta \pi_0(f)\in\mathcal P\cap\mathcal U(X_1,\dots,X_n)$.
We thus obtain that
on events of probability as close to
one as desired and for $n$ large enough,
\begin{equation} \label{LIKtrick2}
|D\ell_n(\hat p_n)[\pi_0(f)]| \le C (\|f\|_{\infty}+\|Df/Dp_0\|_{\infty,Dp_0})
|D\ell_n(\hat p_n)[\hat p_n -p_0]|
\end{equation}
for some constant $C$ that depends on $K$ and $\zeta$ only.

We next need to derive stochastic bounds of the likelihood derivative at $\hat
p_n$ in the direction of $p_0$.
\begin{lemma} \label{LIKgsc}
Suppose $p_0$ is bounded, has bounded support $[0,\alpha_1]$ and satisfies 
$\inf_{x \in 
[0,\alpha_1]} p_0(x) >0$. For~$\hat
p_n$ satisfying (\ref{LIK0}) we have $$|D\ell_n (\hat p_n)[\hat p_n -p_0]| =
O_{P_0^\mathbb N} (n^{-2/3}) $$
\end{lemma}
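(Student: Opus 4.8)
The plan is to expand the likelihood derivative at $\hat p_n$ via a Taylor-type identity around $p_0$ and then to reduce everything to empirical-process increments that are controlled by the $L^2$-rate. By Proposition~\ref{LIKcalc} (with $\alpha=1$), $D\ell_n(\hat p_n)[\hat p_n-p_0]=P_n\big(\hat p_n^{-1}(\hat p_n-p_0)\big)$, and since $\hat p_n^{-1}=p_0^{-1}-p_0^{-1}\hat p_n^{-1}(\hat p_n-p_0)$ this equals
\[
D\ell_n(\hat p_n)[\hat p_n-p_0]=\int p_0^{-1}(\hat p_n-p_0)\,dP_n-\int p_0^{-1}\hat p_n^{-1}(\hat p_n-p_0)^2\,dP_n .
\]
I would bound the two terms separately, in each case reducing to the centred empirical process $(P_n-P_0)$ applied to a random function that, on an event of probability tending to one, lies in a fixed ball of $BV[0,\alpha_1]$ and has $L^2(P_0)$-norm of order $n^{-1/3}$.

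For the linear term, split $\int p_0^{-1}(\hat p_n-p_0)\,dP_n=\int_{S_0}(\hat p_n-p_0)\,dx+(P_n-P_0)\big(p_0^{-1}\hat p_n\big)$, using $dP_0=p_0\,dx$ on $S_0$ and $(P_n-P_0)(1)=0$. The Lebesgue integral vanishes because $\mathrm{supp}(\hat p_n)\subseteq[0,X_{(n)}]\subseteq S_0$ and $\int_0^\infty\hat p_n=\int_{S_0}p_0=1$. For the remaining piece, $p_0\ge\zeta$ gives $\|p_0^{-1}\|_\infty\le1/\zeta$, Lemma~\ref{LIKlow}b) gives $\|\hat p_n\|_\infty=O_{P_0^{\mathbb N}}(1)$ so that $p_0^{-1}\hat p_n$ lies in a fixed $BV$-ball on an event of probability arbitrarily close to one, and Proposition~\ref{propRate} gives $\|p_0^{-1}\hat p_n-1\|_{L^2(P_0)}\le\zeta^{-1}\|\hat p_n-p_0\|_{L^2(P_0)}=O_{P_0^{\mathbb N}}(n^{-1/3})$. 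For the quadratic term, Lemma~\ref{LIKlow}a) gives $\|\hat p_n^{-1}\|_{\infty,[0,X_{(n)}]}=O_{P_0^{\mathbb N}}(1)$, so it is, with high probability, at most a constant multiple of $\int(\hat p_n-p_0)^2\,dP_n=P_0\big((\hat p_n-p_0)^2\big)+(P_n-P_0)\big((\hat p_n-p_0)^2\big)$; here $P_0((\hat p_n-p_0)^2)\le\|p_0\|_\infty\|\hat p_n-p_0\|_2^2=O_{P_0^{\mathbb N}}(n^{-2/3})$ by Proposition~\ref{propRate}, and $(\hat p_n-p_0)^2$ again lies in a fixed $BV$-ball (with high probability) and has $L^2(P_0)$-norm $\le\|\hat p_n-p_0\|_\infty\|\hat p_n-p_0\|_{L^2(P_0)}=O_{P_0^{\mathbb N}}(n^{-1/3})$.

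It then remains to establish the local maximal inequality that both terms need: if, on an event of probability $\to1$, $g_n$ lies in a fixed ball $\mathcal G$ of $BV[0,\alpha_1]$ and $\|g_n-P_0g_n\|_{L^2(P_0)}=O_{P_0^{\mathbb N}}(n^{-1/3})$, then $(P_n-P_0)g_n=O_{P_0^{\mathbb N}}(n^{-2/3})$. This follows from the classical bracketing estimate $\log N_{[\,]}(\varepsilon,\mathcal G,L^2(P_0))\lesssim\varepsilon^{-1}$, which gives $J_{[\,]}(\delta,\mathcal G,L^2(P_0))\lesssim\delta^{1/2}$ and hence, by the local bracketing maximal inequality, $\mathbb E\sup\{|\mathbb G_nf|:f\in\mathcal G,\ \|f-P_0f\|_{L^2(P_0)}\le\delta\}\lesssim\delta^{1/2}+(\delta n)^{-1/2}$; a peeling argument over the random value of $\|g_n-P_0g_n\|_{L^2(P_0)}$ then yields the bound with $\delta=n^{-1/3}$, so that $(P_n-P_0)g_n=O_{P_0^{\mathbb N}}(n^{-1/2}\cdot n^{-1/6})=O_{P_0^{\mathbb N}}(n^{-2/3})$.

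I expect this last step to be the main obstacle, since everything else is arranged precisely so that it applies: the algebraic split produces empirical-process terms whose arguments are simultaneously $BV$-bounded \emph{and} shrinking in $L^2(P_0)$, and it is exactly this shrinking — quantified by the $n^{-1/3}$ \emph{rate} of Proposition~\ref{propRate}, not by mere consistency — that lets one beat the generic $n^{-1/2}$ Donsker bound and reach $n^{-2/3}$. One must also take some care that $p_0^{-1}\hat p_n$ and $(\hat p_n-p_0)^2$ genuinely lie in a single fixed $BV$-ball (this uses both parts of Lemma~\ref{LIKlow} together with $p_0\ge\zeta$), so that the empirical process is not being controlled over a data-dependent class.
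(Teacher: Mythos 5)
Your proof is correct and rests on the same two pillars as the paper's argument: the exact cancellation $P_0\bigl(p_0^{-1}(\hat p_n-p_0)\bigr)=\int_{S_0}(\hat p_n-p_0)\,dx=0$ (the paper phrases this as $D\ell(p_0)[\hat p_n-p_0]=0$), and a local maximal inequality for the empirical process over a fixed $BV$-ball at the $L^2(P_0)$-scale $\sigma\sim n^{-1/3}$ supplied by Proposition~\ref{propRate} (the paper invokes Theorem~3.1 of \cite{GineKoltchinskii2006} together with the bracketing bound of Theorem~2.7.5 in \cite{vanderVaartWellner1996}, which is exactly your $\log N_{[\,]}(\varepsilon)\lesssim\varepsilon^{-1}$ step; its restriction to events where $\|\hat p_n-p_0\|_{2,P_0}\le \bar M n^{-1/3}$ plays the role of your peeling). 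The bookkeeping differs. The paper writes $D\ell_n(\hat p_n)=D\ell_n(\tilde p_n)$ with $\tilde p_n^{-1}\equiv\hat p_n^{-1}\1_{[0,X_{(n)}]}$ and decomposes into $(D\ell_n-D\ell)(\tilde p_n)$ plus $D\ell(\tilde p_n)-D\ell(p_0)$, so the empirical-process fluctuation is carried by the score at $\hat p_n$, namely $\tilde p_n^{-1}(\hat p_n-p_0)$, and the quadratic remainder sits under $P_0$ --- at the price of the truncation and the boundary term $\int_{X_{(n)}}^{\alpha_1}|\hat p_n-p_0|=O_{P_0^{\mathbb N}}(\log n/n)$ in \eqref{LIKsbd}. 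You instead linearise $\hat p_n^{-1}$ around $p_0^{-1}$, so the fluctuation is carried by $p_0^{-1}\hat p_n-1$ and the quadratic remainder sits under $P_n$; this avoids the truncation and the boundary term (the sample points lie in $[0,X_{(n)}]$, where Lemma~\ref{LIKlow}a) controls $\hat p_n^{-1}$), but costs a second application of the maximal inequality, to $(\hat p_n-p_0)^2$. Both routes give $O_{P_0^{\mathbb N}}(n^{-1/2}n^{-1/6}+n^{-2/3})=O_{P_0^{\mathbb N}}(n^{-2/3})$. Two cosmetic points: your stated local bound $\delta^{1/2}+(\delta n)^{-1/2}$ is not the standard form (for $J_{[\,]}(\delta)\asymp\delta^{1/2}$ one gets $\delta^{1/2}+\delta^{-1}n^{-1/2}$), but at $\delta=n^{-1/3}$ both are dominated by $\delta^{1/2}=n^{-1/6}$, so nothing changes; and the $BV$-membership of $p_0^{-1}\hat p_n$ and $(\hat p_n-p_0)^2$ that you flag does follow at once from $|D(gh)|\le\|g\|_\infty|Dh|+\|h\|_\infty|Dg|$ together with the monotonicity and the bounds you already have.
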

\begin{proof}
By Lemma~\ref{LIKlow} we can restrict to an event where 
\[0<\xi \le \inf_{x \in [0, X_{(n)}]}\hat p_n(x) \le\sup_{x\in[0,\infty)}\hat
p_n(x)\le k <\infty\]
 and by \eqref{LIKgrenrat} further to an event where $$\|\hat p_n
-p_0\|_{2,P_0} \le \|p_0\|_\infty^{1/2} \|\hat p_n - p_0\|_{2} 
\le \|p_0\|_\infty^{1/2}
M
n^{-1/3}$$ for some finite constant $M$.
For any $\delta_n\to0$ with $n\delta_n\to\infty$ and some $c>0$
\begin{align*}
\Pr((\alpha_1-X_{(n)})>\delta_n)&=\Pr((\alpha_1-\delta_n)>X_{(n)}
)\\
&=(F_0(\alpha_1-\delta_n))^n\le 
(1-c \delta_n)^n\to0,
\end{align*}
in particular we obtain for $\delta_n=\log n/n$ that
\begin{align}\label{OP}
 \alpha_1-X_{(n)}=O_{P^{\mathbb
N}_0}\left(\frac{\log n}{n}\right).
\end{align}
Let us define the random function
$\tilde p_n^{-1}\equiv \hat p_n^{-1}$ on $[0, X_{(n)}]$ and zero on
$(X_{(n)},\infty)$. By $D\ell_n(\tilde p_n)$ and $D\ell(\tilde p_n)$ we 
denote 
the corresponding right hand sides
in~\eqref{LIKcalcEmp} and~\eqref{LIKcalcEqu}.
We observe that $D\ell_n(\hat p_n)=D\ell_n(\tilde p_n)$.
The function
$h\equiv\tilde p_n^{-1}(\hat p_n-p_0)$ on $[0,X_{(n)}]$ and $h\equiv0$ 
elsewhere is 
of bounded variation
with norm $\|h\|_{BV}\equiv\|h\|_1+|Dh|(\mathbb R)$
bounded by a fixed constant $C$ that depends only on $k, \xi,
\|p_0\|_\infty$ and $\alpha_1$. We observe that $D\ell(p_0)[\hat p_n-p_0]=0$ 
by~\eqref{LIKcalcEqu} 
and obtain
\begin{align}\label{LIKsbd}
& |D\ell_n (\hat p_n)[\hat p_n -p_0]| \notag \\
& = |D\ell_n (\tilde p_n)[\hat p_n -p_0] - D\ell(\tilde p_n)[\hat p_n-p_0] +
(D\ell(\tilde p_n)-D\ell(p_0))[\hat p_n-p_0] | \notag \\
& \lesssim \sup_{h: \|h\|_{BV} \le C, \|h\|_{2,P_0} \leq \bar
M n^{-1/3}}
|(P_n-P_0)(h)|  + \|\hat p_n -p_0\|_2^2 + \int_{X_{(n)}}^{\alpha_1} |\hat p_n 
-p_0|
 \notag \\
& = O_{P_0^\mathbb N} \left(n^{-1/2} n^{-1/6} + n^{-2/3} + \frac{\log
n}{n}\right),
\end{align}
where we have used Theorem~3.1 in \cite{GineKoltchinskii2006} with $$H=id,
\sigma = \bar M  n^{-1/3}, F=const$$ combined with the bracketing
entropy bound for monotone functions
\citep[Theorem~2.7.5]{vanderVaartWellner1996} and its straight forward
generalisation to bounded variation functions to  control the supremum of the
empirical
process, \eqref{LIKgrenrat} to control the second term, and \eqref{OP} for the
last integral.
\end{proof}

\begin{proposition}\label{propBV}
Suppose $p_0 \in \mathcal P$ is bounded, has bounded support $S_0$ and 
satisfies $p_0(x) \ge\zeta>0$ for all
$x\in S_0$. Let $f \in L^\infty$ be such that $\|Df/Dp_0\|_{\infty,Dp_0}$ is 
finite. Then
$$|D \ell_n(\hat p_n)[\pi_0(f)]| =  O_{P_0^\mathbb N}
\left((\|f\|_\infty+\|Df/Dp_0\|_{\infty,Dp_0}) n^{-2/3} \right).$$
\end{proposition}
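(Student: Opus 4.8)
The plan is to combine the deterministic inequality \eqref{LIKtrick2} --- itself obtained from Lemma~\ref{LIKtrick}, Lemma~\ref{LIKpert} and Lemma~\ref{LIKlow}a) --- with the stochastic bound of Lemma~\ref{LIKgsc}, applied with $h=\pi_0(f)$. First I would record that, $p_0$ being bounded, one may fix $K$ with $p_0(x)\le K$ on $S_0$, so that Lemma~\ref{LIKpert} places $p_0\pm\eta\pi_0(f)$ in $\mathcal P\cap\mathcal U$ whenever $|\eta|\le c(\|f\|_\infty+\|Df/Dp_0\|_{\infty,Dp_0})^{-1}$ with $c=c(K,\zeta)$; since the observations lie in $S_0$ almost surely one has $\mathcal U\subseteq\mathcal U(X_1,\dots,X_n)$, and on the high-probability event of Lemma~\ref{LIKlow}a) the estimator $\hat p_n$ is bounded away from zero on $[0,X_{(n)}]$, hence $\hat p_n\in\mathcal P\cap\mathcal U(X_1,\dots,X_n)$ as well. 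By convexity the line segments joining $\hat p_n$ to $p_0\pm\eta\pi_0(f)$ remain in $\mathcal P\cap\mathcal U(X_1,\dots,X_n)$, which is exactly what Lemma~\ref{LIKtrick} requires; taking $\eta$ at its largest admissible value then yields \eqref{LIKtrick2} on that event, with constant $C$ depending only on $K$ and $\zeta$.

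Next I would invoke Lemma~\ref{LIKgsc}, whose hypotheses coincide with those of the present proposition, to get $|D\ell_n(\hat p_n)[\hat p_n-p_0]|=O_{P_0^\mathbb N}(n^{-2/3})$, and feed this into \eqref{LIKtrick2}:
\[
|D\ell_n(\hat p_n)[\pi_0(f)]|\le C\big(\|f\|_\infty+\|Df/Dp_0\|_{\infty,Dp_0}\big)\,|D\ell_n(\hat p_n)[\hat p_n-p_0]|=O_{P_0^\mathbb N}\big((\|f\|_\infty+\|Df/Dp_0\|_{\infty,Dp_0})\,n^{-2/3}\big).
\]

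Since the substantive estimates are already in place, the argument amounts to bookkeeping and there is no serious obstacle. The one point that needs care is the probabilistic reconciliation: \eqref{LIKtrick2} holds only on events whose probability tends to one as $n\to\infty$, whereas Lemma~\ref{LIKgsc} is an $O_{P_0^\mathbb N}$ statement, so given $\varepsilon>0$ I would intersect the event carrying \eqref{LIKtrick2} (probability $>1-\varepsilon/2$ for large $n$) with the event $\{|D\ell_n(\hat p_n)[\hat p_n-p_0]|\le Mn^{-2/3}\}$ (probability $>1-\varepsilon/2$ for a suitable $M$ and large $n$), on whose intersection the displayed bound holds with $CM$ in place of $C$; this gives the claimed $O_{P_0^\mathbb N}$ rate. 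I would also explicitly check that $C$ and the admissible range of $\eta$ depend on $K$ and $\zeta$ only and not on $f$, since this is what makes the estimate scale linearly in $\|f\|_\infty+\|Df/Dp_0\|_{\infty,Dp_0}$ and hence applicable uniformly in $f$ in the proof of Theorem~\ref{thm:bv}.
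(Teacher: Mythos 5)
Your proposal is correct and follows exactly the paper's route: Lemma~\ref{LIKpert} to place $p_0\pm\eta\pi_0(f)$ in $\mathcal P\cap\mathcal U$, the resulting inequality \eqref{LIKtrick2} via Lemma~\ref{LIKtrick} and Lemma~\ref{LIKlow}a), and then Lemma~\ref{LIKgsc} for the $O_{P_0^\mathbb N}(n^{-2/3})$ bound on $|D\ell_n(\hat p_n)[\hat p_n-p_0]|$. Your added care about intersecting the high-probability events and tracking that the constants depend only on $K$ and $\zeta$ is sound bookkeeping that the paper leaves implicit.
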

\begin{proof}
 By Lemma~\ref{LIKpert} we have that $p_0 + \eta \pi_0(f) \in \mathcal P \cap
\mathcal U$ for $\eta$ a small multiple of 
$\|f\|_\infty+\|Df/Dp_0\|_{\infty,Dp_0}$.
The claim of the proposition then follows from (\ref{LIKtrick2}) and
Lemma \ref{LIKgsc}.
\end{proof}

We are now ready to prove Theorem~\ref{thm:bv} and Theorem~\ref{LIKkw2}.
\begin{proof}[Proof of Theorem~\ref{thm:bv}]
Without loss of generality we can set $f$ equal to 
zero outside of $(0,\alpha_1)$.
We use Lemma \ref{LIKclas},
Proposition \ref{LIKcalc}, $\hat p_n, p_0 \in \mathcal U(X_1, \dots, X_n)$ by
Lemma \ref{LIKlow} and a Taylor expansion up to second order to see
 \begin{align*}
& |(\hat P_n -P_n)(f)|  = |D\ell_n(p_0)[\pi_0(f)]+ D^2 \ell(p_0)[\hat p_n-p_0,
\pi_0(f)]| \\
 &\le |D\ell_n(\hat p_n)[\pi_0(f)]| + |(D^2 \ell_n(p_0)- D^2 \ell (p_0))[\hat
p_n-p_0, \pi_0(f)]| \\
&~~~~ + \tfrac{1}{2}|(D^3 \ell_n(\bar p_n)-D^3\ell(\bar p_n))[\hat p_n -p_0,
\hat p_n-p_0, \pi_0(f)]| \\
&~~~~ + \tfrac{1}{2}|D^3\ell(\bar p_n)[\hat p_n -p_0, \hat
p_n-p_0, \pi_0(f)]|,
\end{align*}
where $\bar p_n$ equals, on $[0,X_{(n)}]$, some mean values between $\hat
p_n$ and $p_0$, and $\bar p_n^{-1}$ is zero otherwise by convention. 
Here again $D^3\ell_n(\bar p_n)$ and $D^3\ell(\bar p_n)$ stand for the
corresponding right hand sides in~\eqref{LIKcalcEmp} and~\eqref{LIKcalcEqu}.
The first term is bounded using Proposition~\ref{propBV}, giving the
bound $B n^{-2/3}$ in probability.
We define $h\equiv p_0^{-1}(\hat p_n-p_0)(f-P_0 f)$ on 
$[0,\alpha_1]$ and $h\equiv0$ 
elsewhere so that the second term equals
$|(P_n-P_0)h|$.
With probability arbitrarily close to one we have
$\|h\|_{BV}\lesssim\|f\|_\infty+\|f\|_{BV}
\lesssim\|f\|_\infty+\|Df/Dp_0\|_{\infty,Dp_0}$ and 
$\|h\|_{2,P_0}\lesssim\|f\|_\infty
n^{-1/3}$.
The second term is bounded similarly as in
(\ref{LIKsbd}) above by 
$$ \sup_{h: \|h\|_{BV} \le \tilde CB, \|h\|_{2, P_0} \le \tilde M B
n^{-1/3}}|(P_n-P_0)(h)| =
O_{P_0^\mathbb N}(B n^{-2/3}). $$
The third term is bounded the same way, using $\|\hat p_n 
-p_0\|_{BV}=O_{P_0^\mathbb N}(1)$, and
noting that $\bar p_n$ as a convex combination of $\hat p_n, p_0$ has variation
bounded by a fixed constant on $[0,X_{(n)}]$, so that we can estimate the term
by the supremum of the empirical process over a fixed $BV$-ball, and using again
Lemma \ref{LIKlow} to bound~$\bar p_n$ from below on $[0, X_{(n)}]$. Using the
last fact the fourth term is also seen to be of order $$\|f\|_\infty\|\hat p_n
-p_0\|^2_2 = O_{P_0^\mathbb
N}(Bn^{-2/3})$$ in view of \eqref{LIKgrenrat} completing the
proof the first claim. The second claim follows from the fact that~$\mathcal B$ 
is a bounded set in the space of bounded variation
functions and thus a Donsker class, which follows from Theorem~2.7.5 in 
\cite{vanderVaartWellner1996}.
\end{proof}

\medskip

\begin{proof}[Proof of Theorem~\ref{LIKkw2}]
It is sufficient to prove the result for $1/2<s<1$ since the H\"older spaces 
are nested. 
Let $[a,b]$ be a compact interval.
In order to define Besov 
spaces $B^s_{pq}([a,b])$, $1\le p\le \infty$, 
$1\le q \le\infty$, $0<s<S$, we consider
a boundary corrected Daubechies wavelet basis of regularity $S$ and such that 
$\phi,\psi\in C^S([a,b])$, see \cite{CohenDaubechiesVial1993}.
We define Besov spaces as in 
\citep{GineNickl2015} by the wavelet 
characterisation
\begin{align*}
 B^s_{pq}([a,b])\equiv\left\{
\begin{array}{ll}
 \{f\in L^p([a,b]):\|f\|_{B^s_{p,q}}<\infty\}, & 1\le p<\infty,\\
\{f\in C([a,b]):\|f\|_{B^s_{p,q}}<\infty\}, & p=\infty,
\end{array}
\right.
\end{align*}
with norms given by
\begin{align*}
& \|f\|_{B^s_{pq}([a,b])}\\
&\equiv
\left\{
\begin{array}{l}
 \left(\sum\limits_{k=0}^{2^J-1}|\langle f,\phi_{Jk} 
\rangle|^p\right)^{\frac{1}{p}}+\left(\sum\limits_{l=J}^{\infty}2^{ql(s+\frac1{2
}
-\frac{1}{p}) } \left(\sum\limits_{m=0}^{2^l-1}|\langle f,\psi_{lm} 
\rangle|^p\right)^{\frac{q}{p}}\right)^{\frac1{q}},  p<\infty,\\
 \max\limits_{k}|\langle f,\phi_{Jk} 
\rangle|+\left(\sum\limits_{l=J}^{\infty}2^{ql(s+\frac1{2}) } 
\left(\max\limits_{m}|\langle 
f,\psi_{lm} 
\rangle|\right)^{q}\right)^{\frac1{q}},  p=\infty,
\end{array}
\right.
\end{align*}
where in the case $q=\infty$ the $\ell_q$-sequence norm has to be replaced by 
the supremum norm $\|\cdot\|_\infty$.

Without loss of generality we consider a ball $\mathcal F$ in the H\"older 
space $C^s(S_0)$.
We decompose the functions~$f$ in a ball $\mathcal F$ of $C^s(S_0)$ by using the
projection $\pi_{V_j}(f)$ onto the span of the wavelets up to resolution level
$j$,
\begin{equation}\label{LIKdecomp2}
  \begin{aligned} 
\sup_{f\in\mathcal F}|(\hat P_n
-P_n)(f)| & \le \sup_{f \in \mathcal F}\left|\int_{S_0} (\hat
p_n -p_0) (f-\pi_{V_j}(f))\right|  \\
& ~~+ \sup_{f \in \mathcal F}| (\hat P_n-P_n)(\pi_{V_j}(f))| \\
& ~~+ \sup_{f \in
\mathcal F} |(P_n-P_0)(f-\pi_{V_j}(f))|.
\end{aligned}
\end{equation}
Since $C^s(S_0)=B^s_{\infty
\infty}(S_0)$ for $s \notin \mathbb N$ and since the
$C^1$-norm  is bounded by the $B^1_{\infty 1}$-norm, we have for the wavelet
partial sum $\pi_{V_j}(f)$ of $f \in C^s(S_0)$
using the unified notation
$\psi_{-1,k}=\phi_{l_0,k}$
\begin{align*}
\|\pi_{V_j}(f)\|_{C^1} \lesssim \sum_{l \le j} 2^{3l/2} \max_{k} |\langle f,
\psi_{lk} \rangle| &\lesssim 2^{j(1-s)}\max_{l \le j} 2^{l(s+1/2)} \max_{l \le
j,k}
|\langle f, \psi_{lk} \rangle| \\
&\le 2^{j(1-s)} \|f\|_{B^s_{\infty \infty}}.
\end{align*}
Thus taking $2^{j} \sim n^{1/3}$ we have by Proposition~\ref{propBV}
$$\sup_{f \in \mathcal F}|(\hat P_n -P_n)(\pi_{V_j}(f))| =O_{P_0^\mathbb
N}(n^{-2/3} n^{(1-s)/3}) = o_{P_0^\mathbb
N}(1/\sqrt{n})$$
since $s>1/2$. 
Moreover, by Parseval's identity $$\sup_{f \in \mathcal F}\|\pi_{V_j}(f)-f\|_2=
O(2^{-js}).$$ 
Also, using the $L^2$-convergence rate in~\eqref{LIKgrenrat} and the 
Cauchy--Schwarz
inequality $$\sup_{f \in \mathcal F}\left |\int_0^1 (\hat p_n-p_0)
(f-\pi_{V_j}(f)) \right| =
O_{P_0^\mathbb
N}(n^{-1/3}n^{-s/3}) = o_{P_0^\mathbb
N}(1/\sqrt n)$$ and since the class
$\{f-\pi_{V_j}(f)\}$ is contained in the fixed $s$-H\"older ball $\mathcal F$, 
which is a
$P_0$-Donsker class for $s>1/2$ in view of Corollary~5 in
\cite{NicklPoetscher2007}, and has
envelopes that converge to zero we see that the third term in (\ref{LIKdecomp2})
is also $o_{P_0^\mathbb N}(1/\sqrt n)$ (since the empirical process is tight and
has a degenerate Gaussian limit). The remaining claims follow from the fact that
$\mathcal F$ is a $P_0$-Donsker class. 
\end{proof}

\bibliography{bib}

\end{document}